
\documentclass[preprint,10pt]{elsarticle}


\usepackage{amssymb,amsthm}
\usepackage{graphicx,amsmath,bm,color,geometry,subfig}
\usepackage{algorithmic,algorithm,diagbox} 
\usepackage{amsmath}
\usepackage{bbm}

\usepackage[colorlinks,
            linkcolor=blue,
            anchorcolor=blue,
            citecolor=blue]{hyperref}

\newtheorem{theorem}{Theorem}

\newtheorem{lemma}{Lemma}

\journal{ArXiv}

\begin{document}

\begin{frontmatter}



\title{Resolution invariant deep operator network for PDEs with complex geometries \tnoteref{label-title}}
\tnotetext[label-title]{This work is partially supported by the National Natural Science Foundation of China (NSFC) under grant number 12101407, the Chongqing Entrepreneurship and Innovation Program for Returned Overseas Scholars under grant number CX2023068, and the Fundamental Research Funds for the Central Universities under grant number 2023CDJXY-042.}

\author[label-addr1]{Jianguo Huang}
\ead{huangjg@shanghaitech.edu.cn}

\author[label-addr2,label-addr3]{Yue Qiu\corref{label-cor}}
\ead{qiuyue@cqu.edu.cn}
\cortext[label-cor]{Corresponding author.}

\affiliation[label-addr1]{
	organization={School of Information Science and Technology, ShanghaiTech University},
	city={Shanghai},
	postcode={201210},
	country={China.}}

\affiliation[label-addr2]{
	organization={College of Mathematics and Statistics, Chongqing University},
	city={Chongqing},
	postcode={401331},
	country={China.}}
	
\affiliation[label-addr3]{
	organization={Key Laboratory of Nonlinear Analysis and its Applications (Chongqing University), Ministry of Education},
	city={Chongqing},
	postcode={401331},
	country={China.}}

\begin{abstract}
Neural operators (NO) are discretization invariant deep learning methods with functional output and can approximate any continuous operator. NO have demonstrated the superiority of solving partial differential equations (PDEs) over other deep learning methods. However, the spatial domain of its input function needs to be identical with its output, which limits its applicability. For instance, the widely used Fourier neural operator (FNO) fails to approximate the operator that maps the boundary condition to the PDE solution. To address this issue, we propose a novel framework called resolution-invariant deep operator (RDO) that decouples the spatial domain of the input and output. RDO is motivated by the Deep operator network (DeepONet) and it does not require retraining the network when the input/output is changed compared with DeepONet. RDO takes functional input and its output is also functional so that it keeps the resolution invariant property of NO. It can also resolve PDEs with complex geometries whereas NO fail. Various numerical experiments demonstrate the advantage of our method over DeepONet and FNO.
\end{abstract}


\begin{keyword}
	Operator learning \sep Neural operator \sep  DeepONet 
\end{keyword}

\end{frontmatter}



\section{Introduction}\label{sec:Introduction}

Partial differential equations (PDEs) have many real world applications in multiphysics, biological and economic systems~\cite{di2021deeponet,qiu2020efficient,hesthaven2018non}. Scientists and engineers have been working for centuries to compute the solutions of PDEs accurately. Traditional methods, such as the finite element method~\cite{rao2017finite,zienkiewicz2005finite}, the finite difference methods~\cite{strikwerda2004finite, thomas2013numerical}, and the spectral methods~\cite{xiu2010numerical,shen2011spectral} have made numerous success. However, these methods need expensive computation resources, especially for inverse problems and hybrid problems of high dimension~\cite{butler2011posteriori, kaipio2006statistical}.

In the last decades, machine learning, including deep learning methods, have made huge success in real-life applications, such as computer vision~\cite{kingma2018glow,pratt2017fcnn}, natural language processing~\cite{vaswani2017attention,floridi2020gpt}, automatic speech recognition~\cite{yu2016automatic, ren2019almost}, and so on. It is noteworthy that PDEs promote the development of machine learning, such as the diffusion models~\cite{DBLP:conf/iclr/0011SKKEP21}, ResNet~\cite{he2016deep}, and Neural ODE~\cite{DBLP:conf/nips/ChenRBD18}. 
Simultaneously, machine learning also succeed in computational science, specifically, neural networks are harnessed to solve PDEs numerically. The first category of approaches is called function regression, where neural networks are used to approximate the solution of  PDEs~\cite{guo2022monte,gao2023failure,hu2022augmented,CHEN2021110666,YU2022114823}, such as the widely used physics informed neural networks (PINNs)~\cite{raissi2019physics}.  PINNs can solve the PDEs without labeled data by  embedding physical information, which is very different from traditional machine learning methods. PINNs can also address inverse problems more effectively compared with conventional approaches grounded in the Bayesian framework. However, PINNs are limited to approximate the solution for specific instances of a given PDE problem. This implies that if the parameters of the PDE change, the neural network needs to undergo retraining. This characteristic makes PINNs less efficient in scenarios where parametric PDEs must be repeatedly solved for varying parameters~\cite{regazzoni2021physics}. The other category is called operator regression or neural operator (NO), which was proposed for parametric PDEs. Neural operators utilize neural networks to learn the solution operator for the same kind of PDE problems rather than the solution of a specific PDE problem. For neural operators, the input could be the functions that represent the boundary conditions, initial conditions selected from a properly designed input space, or a vector representing the parameters in the parametric space of PDEs. 

DeepONet~\cite{Lu2021,DENG2022411, lanthaler2022error, zhu2023fourier}, inspired by the work in~\cite{chen1995universal}, utilizes a linear combination of finite nonlinear functions to approximate the solution operator from one Banach space to another Banach space.  \citet{kaltenbach2022semi} employed an invertible neural network~\cite{meng2023physics,guo2022normalizing} to establish a mapping between the parametric input and the weights of linear combinations. \citet{li2020fourier} proposed the Fourier neural operator (FNO) to approximate the Green function. FNO exhibits resolution invariant, i.e., the network could be trained using low resolution data and the learned network could be directly generalized to high resolution output prediction beyond the discretization of the training data. Besides the low-resolution data being used to minimize the empirical risk error during the training, high-resolution physical information could also be embedded into the loss function similar to PINNs.  This approach is called physic-informed neural operator (PINO)~\cite{li2021physics}. Benefiting from multi-resolution information, PINO has higher precision than the standard PINNs and other variants of PINNs. In FNO, a pointwise mapping is used to lift the input function to higher dimension (or channels) functions, which can extract additional information from the input function. However, this mapping disregards the interrelation among distinct positions within the domain of the input function. To capture these relationships, the attention mechanisms~\cite{vaswani2017attention, geneva2022transformers} and graph neural networks~\cite{lotzsch2022learning} have been employed. \citet{lotzsch2022learning} applied the graph neural networks to resolve different boundary value problems. \citet{kissas2022learning} employed the attention mechanism based on the known query locations to enhance the performance of the trunk net of the standard DeepONet. \citet{li2023transformer} utilized the standard transformer to approximate a unified  operator for a specific type of PDEs defined on different domains, encompassing both regular and irregular domains.  

As researchers point out that FNO can neither predict flexible location nor solve the PDEs directly defined on arbitrary domains~\cite{lu2022comprehensive}. Therefore, \citet{li2022fourier} proposed the   geometry-aware Fourier neural operator (Geo-FNO) that converts the broader physical domain to a standardized latent domain. However, Geo-FNO still struggles with handling the case that the input function and solution function are defined over different domains. DeepONet can naturally solve this issue by decoupling the input and output domains, while DeepONet fails to keep the same architecture for different input resolutions and retraining is necessary when the input/output resolution changes.

In this paper, we propose a novel neural operator called resolution-invariant deep operator (RDO) for PDEs where the input function and the solution function are defined on different domains. Our work is motivated by DeepONet and compared with DeepONet, RDO has the property of resolution invariant, i.e., once trained RDO can make predictions of varying resolutions without network retraining.  Specifically, we use a novel neural operator to replace the branch net which is a fully connected neural network (FNN) in DeepONet. This novel operator combines a neural operator such as FNO with an integral operator and yields a mapping from an infinite-dimension Banach space to a finite-dimension space. This makes RDO can handle with functional input. Benefiting from the decoupling of the input and output domain, RDO can also handle with PDEs defined on irregular domain and time-dependent PDEs. Numerical experiments demonstrate the superiority of RDO to DeepONet and FNO.

This paper is organized as follows. Section~\ref{sec:pre} presents the problem framework and introduces the baseline method, DeepONet. In Section~\ref{sec:method}, we give the details and the approximation theorem of RDO. In Section~\ref{sec:Numerical}, we compare the performance of RDO with  DeepONet and FNO using three benchmark problems. We summarize this paper in Section~\ref{sec:Conclusion}.

\section{Operator regression}\label{sec:pre}

\subsection{Problem settings}
In this section, we introduce the neural operator for PDE  problems. Considering the PDE given by
\begin{align*}
    \mathcal{L}(a(x),u(x))=f(x),\ x\in\Omega\\
    u(x) = b(x),\ x\in\partial \Omega 
\end{align*}
where $\Omega$ is a bounded domain, $a(\cdot)\in \mathcal{A} \subseteq \mathcal{V}$ is the coefficient function, $u(\cdot)\in \mathcal{U}$ is the unknown solution, and $f(\cdot)$ is the governing function. 
Here, $\mathcal{V}$ and $\mathcal{U}$ are infinite-dimension Banach spaces and $\mathcal{L}: \mathcal{V} \times \mathcal{U}\rightarrow f$ is a nonlinear or linear partial differential operator.
The function  $b\in \mathcal{B}\subseteq \mathcal{V}$ represents the boundary condition and can be considered as a parameter of the PDE.
We want to parameterize the solution operator $G:\mathcal{A}\times\mathcal{B} \times f\rightarrow \mathcal{U}$ using neural networks.
In this paper, the function $f$ is fixed, only one of $a(\cdot)$ and $b(\cdot)$ is varying. Consequently, the solution operator $G$ could be simplified to a mapping from $\mathcal{V}$ to $\mathcal{U}$.
We aim to build a parametric model $\mathcal{G}_{\theta}$ to approximate the solution operator ${G}$, where $\theta$ represents the parameters of neural networks. The loss functional denoted by $C:\mathcal{U}\times \mathcal{U}\rightarrow \mathbbm{R}$ and the minimizing problem is given by
\begin{align}\label{eq:min_a}
    \min_{\theta\in\Theta} \mathbbm{E}_{a\sim\pi}[C({G}(a),\mathcal{G}_\theta(a))],
\end{align}
where $\pi$ represents the probability distribution on $\mathcal{A}$, $C:\mathcal{U}\times \mathcal{U} \rightarrow \mathbb{R}^+ $ measures the distance between two functions, and $\Theta$ is the set of trainable parameters.
Given the training data set $\{(a^{(i)},u^{(i)})\}_{i=1}^{N}$ where the input and output pairs are indexed by $i$, Equation~\eqref{eq:min_a} can be approximated by an empirical risk minimization problem~\cite{Wang2021GeneralizingTU}, 
\begin{align}\label{eq:min_a_empirical}
    \min_{\theta\in\Theta} \sum_{i=1}^{N}[C({G}(a^{(i)}),\mathcal{G}_\theta(a^{(i)}))].
\end{align}

In this paper, we unify mathematical notations and assume that the vector-valued function $a:\mathbb{R}^{d_1}\rightarrow \mathbb{R}^{d_a}$ refers to the input function of the operator ${G}$, and the vector-valued  function $u:\mathbb{R}^{d_2}\rightarrow \mathbb{R}^{d_u}$ refers to the output of $G$. For simplicity, we assume that $d_u=1$ and $d_a=1$. Meanwhile, let $\mathcal{D}_1\subset$ $ \mathbb{R}^{d_1}, \mathcal{D}_2\subset \mathbb{R}^{d_2}$ represent the physical domain of $a$ and $u$, respectively. 

\subsection{DeepONet}
 DeepONet utilizes a linear combination of finite basis functions to approximate the infinite-dimension operator. The coefficients are approximated by the branch net whose input is the input function and the basis functions are approximated by the trunk net whose input is the predicted location. Figure~\ref{fig:deeponet_structure} depicts the architecture of DeepONet and with this network structure, DeepONet can query arbitrary location in the domain $\mathcal{D}_2$ of the solution function, thereby directly solving a variety of problems, including those involving irregular domain PDEs and time-dependent PDEs~\cite{lu2022comprehensive}. 
\begin{figure}[H]
    \begin{center}
            \includegraphics[width=0.75\linewidth]{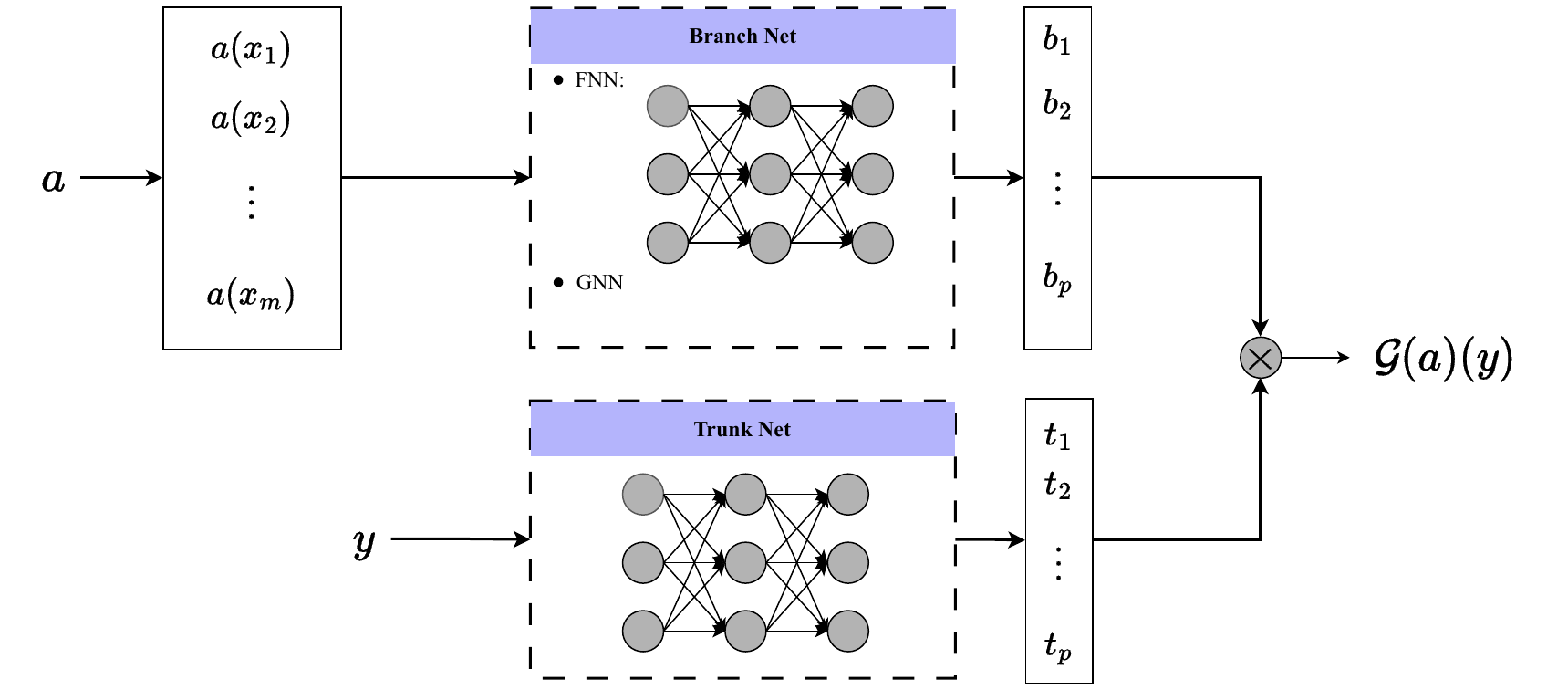}
      \end{center}
    \caption{DeepONet architecture.}\label{fig:deeponet_structure}
\end{figure}

To work with the input function $a$ numerically, DeepONet discretizes the input function $a(\cdot)$ and evaluate $a(\cdot)$ at a set of fixed locations $\{x_i\}_{i=1}^m$, which in turn gives the pointwise
evaluation $(a(x_1), a(x_2), \dots, a(x_m))$.
Let $\mathbf{b}: \mathbb{R}^{m} \rightarrow \mathbb{R}^{p}$ and $\mathbf{t}:\mathcal{D}_2\rightarrow \mathbb{R}^{p}$ represent the branch net and the trunk net, respectively, and let  $\{b_k\}_{k=0}^p\in\mathbb{R}$, and $\{t_k\}_{k=1}^p \in\mathbb{R}$ the corresponding output, respectively.
The output of DeepONet is expressed by 
\[
\mathcal{G}_{\theta}(a)(y) = \mathbf{b}(a) \mathbf{t}(y)^T  + b_0 = \sum_{k=1}^{p} b_k(a) t_k(y) + b_0,
\]
where $ y\in \mathcal{D}_2$ and $b_0\in\mathbb{R}$ is a bias.
For the framework of DeepONet illustrated in Figure~\ref{fig:deeponet_structure}, the fully connected neural network (FNN) is commonly used as the trunk net while the network structure of the branch net depends on specific applications. For example, when the discretization grids of $a$ is unstructured,  the graph neural network (GNN)~\cite{lotzsch2022learning,jiang2023phygnnet,Horie2022PhysicsEmbeddedNN} as the branch net is preferred. 

\section{Proposed Architecture}\label{sec:method}
In this section, we introduce a novel framework called resolution-invariant deep operator (RDO) based on DeepONet and provide the corresponding universal approximation theorem. Subsequently, we give a comprehensive overview of the parameterisation of RDO, including the parameterisation of subnetworks.

\subsection{RDO}\label{sec:RDO}
While DeepONet is a flexible framework, it lacks the ability to retain a consistent neural network structure for input functions of differing resolutions. The primary reason is that the branch net approximates a mapping from a finite-dimension linear space to another finite-dimension space. 
Specifically, the input of the branch net is a discretization of the input function rather than the function itself. This motivates us to design a novel neural network structure that constructs a mapping from an infinite Banach space to a finite-dimension space to replace the branch net from DeepONet. 

We represent the proposed network for the branch net by $\mathcal{G}_1: \mathcal{A}\rightarrow \mathbb{R}^p$, and we construct $\mathcal{G}_1$ by $\mathcal{G}_1 := \mathcal{T}\circ \mathcal{G}_0$. Here $\mathcal{T}$ refers to the integral operator, and $\mathcal{G}_0$ is a commonly used neural operator in~\cite{DBLP:journals/jmlr/KovachkiLLABSA23}. Denote $\phi_T(x):= \mathcal{G}_0(a)(x)\in \mathbb{R}^p$, and let $ \phi_{T,i}(x)$ represent the $i$-th element of the vector-valued mapping $\phi_{T}(x)$. The integral transformation $\mathcal{T}$ is given by 
\begin{equation}\label{eq:integral_equation}
    \mathcal{T}(\phi_T) = \int_{D_1} \phi_T(x) dx =  \begin{bmatrix}
        \int_{D_1} \phi_{T,1}(x) dx \\
        \int_{D_1} \phi_{T,2}(x) dx \\
        \vdots\\
        \int_{D_1} \phi_{T,p}(x) dx
        \end{bmatrix}
        =  \begin{bmatrix}
            b_1\\
            b_2\\
            \vdots\\
            b_p
            \end{bmatrix}   , 
\end{equation}
which maps the function $\phi_T(x)$ to a real vector $(b_1,b_2,\dots,b_p)^T$. Then, the overall framework of RDO can be defined by 
\[
\mathcal{G}(a)(y)=\mathbf{f}(y) *\mathcal{G}_1(a),
\]
where $*$ represents the inner product and  $\mathbf{f}(\cdot): \mathcal{D}_2 \rightarrow \mathbb{R}^{ p}$ is another parametric model. Subsequently,  the general computation flow of RDO is given by 
$$a(x) \underbrace{\xrightarrow[]{\mathcal{G}_0} \phi_T(x) \xrightarrow[]{\mathcal{T}}}_{\mathcal{G}_1}  \mathbb{R}^p \xrightarrow[]{\mathbf{f}(y)*} u(y). $$

Various neural operators referred in~\cite{DBLP:journals/jmlr/KovachkiLLABSA23} could be chosen as the operator $\mathcal{G}_0$. The parameterisation of the operator $\mathcal{G}_1$ will be introduced in Section~\ref{sec:implementation_RDO}. In addition, the comparison of RDO with DeepONet and FNO is introduced in Table~\ref{tab:comparison}.
\begin{table}[H]
    \centering
    \caption{Comparison between DeepONet, FNO and RDO}
    \label{tab:comparison}
    \begin{tabular}{cccc}
         & RDO & DeepONet & FNO \\
         \hline
         Input domain $\mathcal{D}_1$ & \textbf{ Arbitrary}  & Arbitrary& Regular \\
         Output domain $\mathcal{D}_2 $& \textbf{Arbitrary} & Arbitrary& Regular\\
        The relation between $\mathcal{D}_1$ and $\mathcal{D}_2$ & \textbf{Arbitrary}  & Arbitrary & Identicl\\
        Prediction location & \textbf{Arbitrary}   & Arbitrary& Grid points \\
        \textbf{Resolution Invariant} & \textbf{Yes} & No & Yes\\
        Time Dependent Problems & \textbf{Yes}  & Yes & No \\
    \end{tabular}   
\end{table}

\subsection{Approximation theorem} 
In this part, we introduce the generalized universal approximation of RDO.
Let $C(\mathcal{D})$ denote a Banach space of all continuous functions defined on the bounded domain $\mathcal{D}$ with the norm $||f||_{C(\mathcal{D})}=\max_{x\in \mathcal{D}}|f(x)|$. 

\begin{lemma}\label{lemma:discretization_operator}
    Let $f \in C(\mathcal{D})$ be a continuous function defined on the domain $\mathcal{D}$ and $x_1, \dots, x_m$ be $m$ fixed points in $\mathcal{D}$. The vector $f_m = (f(x_1), \dots, f(x_m))^T \in \mathbb{R}^m$ is obtained by evaluating the function $f$ at these fixed points. Then, the mapping $\mathcal{Z}: C(\mathcal{D}) \rightarrow \mathbb{R}^m$ with $\mathcal{Z}(f) = f_m$ is a bounded and continuous linear operator.
\end{lemma}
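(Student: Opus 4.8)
The plan is to verify the three asserted properties in turn, exploiting the standard fact that for a linear map between normed spaces boundedness and continuity are equivalent; thus it suffices to establish linearity and boundedness, and continuity will follow automatically. The argument is short, so the work is essentially bookkeeping with norms rather than a genuine difficulty.

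First I would check linearity. This is immediate from the definition of pointwise evaluation: for any $f, g \in C(\mathcal{D})$ and scalars $\alpha, \beta \in \mathbb{R}$, the $i$-th component of $\mathcal{Z}(\alpha f + \beta g)$ is $(\alpha f + \beta g)(x_i) = \alpha f(x_i) + \beta g(x_i)$, so that $\mathcal{Z}(\alpha f + \beta g) = \alpha \mathcal{Z}(f) + \beta \mathcal{Z}(g)$. Hence $\mathcal{Z}$ is linear.

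Next I would establish boundedness. The key observation is that each sampling point $x_i$ lies in $\mathcal{D}$, so $|f(x_i)| \le \max_{x \in \mathcal{D}} |f(x)| = ||f||_{C(\mathcal{D})}$ for every $i$. Equipping $\mathbb{R}^m$ with the Euclidean norm then gives
\[
||\mathcal{Z}(f)||_{2} = \left( \sum_{i=1}^{m} |f(x_i)|^2 \right)^{1/2} \le \left( \sum_{i=1}^{m} ||f||_{C(\mathcal{D})}^2 \right)^{1/2} = \sqrt{m}\, ||f||_{C(\mathcal{D})},
\]
so $\mathcal{Z}$ is bounded with operator norm at most $\sqrt{m}$. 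If instead one uses the maximum norm on $\mathbb{R}^m$, the same estimate yields the sharper bound $||\mathcal{Z}(f)||_\infty \le ||f||_{C(\mathcal{D})}$; the choice is immaterial since all norms on the finite-dimensional space $\mathbb{R}^m$ are equivalent. Being linear and bounded, $\mathcal{Z}$ is continuous, which completes the proof.

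I do not anticipate any real obstacle: the statement is an elementary fact about the evaluation functionals on $C(\mathcal{D})$, and the decisive ingredient is simply that $|f(x_i)| \le ||f||_{C(\mathcal{D})}$. The only point requiring mild care is the well-definedness of the norm $||\cdot||_{C(\mathcal{D})}$ as a maximum, which presupposes that $\mathcal{D}$ is compact (closed and bounded); this is implicit in the paper's definition of $C(\mathcal{D})$ and guarantees that $\max_{x\in\mathcal{D}}|f(x)|$ is attained and finite for every $f \in C(\mathcal{D})$, so that both sides of the displayed inequality make sense.
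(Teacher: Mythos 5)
Your proposal is correct and follows essentially the same route as the paper's own proof: verify linearity pointwise, bound $|f(x_i)|$ by $\|f\|_{C(\mathcal{D})}$, and deduce continuity from boundedness (the paper writes out the $\epsilon$-argument explicitly with the max norm on $\mathbb{R}^m$, while you invoke the standard equivalence and also record the Euclidean-norm constant $\sqrt{m}$). Your closing remark that compactness of $\mathcal{D}$ is needed for $\max_{x\in\mathcal{D}}|f(x)|$ to be attained is a worthwhile precision the paper leaves implicit.
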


\begin{proof}
 $\forall f^1,f^2\in C(\mathcal{D}) $, we have
    $$\mathcal{Z} f^1 + \mathcal{Z} f^2 = 
    \left[\begin{array}{c}
        f^1(x_1) + f^2 (x_1)\\
        f^1(x_2) + f^2 (x_2)\\
        \vdots\\
        f^1(x_m) + f^2 (x_m)
    \end{array}\right]
    = \mathcal{Z}(f^1+f^2).$$
    And for any $f\in  C(\mathcal{D})$, it is easy to verify that
    $$\mathcal{Z}(\alpha f) = \alpha \mathcal{Z}(f)$$
    where $\alpha$ is a scalar. Therefore, $\mathcal{Z}$ is a linear operator. In addition,
    $$ ||\mathcal{Z}(f)||_\infty =  ||f_m ||_\infty = \max_{i} |f(x_i)| \leq \max_{x\in \mathcal{D}}|f(x)| =  ||f||_{C(\mathcal{D})} ,$$
    which implies the operator $\mathcal{Z}$  is  bounded. 

    Suppose that $||\mathcal{Z}||\leq M$ where $M$ is a constant, for any $\epsilon>0$, there exists $f^1,f^2\in C(\mathcal{D})$ such that $||f_1 - f_2||\leq \frac{\epsilon}{M}$. Then,
    \[
    ||\mathcal{Z}f^1- \mathcal{Z} f^2 || \leq  || \mathcal{Z} || ||f^1- f^2 || \leq  M \cdot \frac{\epsilon}{M} = \epsilon,
    \]
which implies that the operator $\mathcal{Z}$ is also continuous. 
\end{proof}
Note that the operator $\mathcal{Z}$ is called ``encoder" in \cite{lanthaler2022error}. Next, we first introduce the generalized universal approximation theorem of DeepONet and then give the corresponding approximation theorem of RDO.

\begin{lemma}\label{lemma:deeponet}
    (\cite{Lu2021}). Suppose that $X$ is a Banach space, and $\mathcal{D}_1 \subset X$, $\mathcal{D}_2 \subset \mathbb{R}^{d_2}$ are two compact sets in $X$ and $\mathbb{R}^{d_2}$, respectively. Let $\mathcal{A}$ be a compact set in $C\left(\mathcal{D}_1\right)$, and assume that $G: \mathcal{A} \rightarrow C\left(\mathcal{D}_2\right)$ is a nonlinear continuous operator. Then, for any $\epsilon>0$, there exist positive integers $m$ and $p$, continuous vector functions $\mathbf{b}: \mathbb{R}^m \rightarrow \mathbb{R}^p, \mathbf{f}: \mathbb{R}^{d_1} \rightarrow \mathbb{R}^p$, and $\lbrace  x_i \in \mathcal{D}_1 \rbrace_{i=1}^m $, such that
$$
\left|G(a)(y)-\langle\underbrace{\mathbf{b}\left(a\left(x_1\right), a\left(x_2\right), \cdots, a\left(x_m\right)\right)}_{\text {branch }}, \underbrace{\mathbf{f}(y)}_{\text {trunk }}\rangle\right|<\epsilon
$$
holds for all $a \in \mathcal{A}$ and $y \in \mathcal{D}_2$, where $\langle\cdot, \cdot\rangle$ denotes the dot product in $\mathbb{R}^p$. 
\end{lemma}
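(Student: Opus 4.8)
The plan is to reduce the operator approximation to a finite-dimensional, separation-of-variables problem and then close with the Stone--Weierstrass theorem; because Lemma~\ref{lemma:deeponet} only asks for \emph{continuous} maps $\mathbf{b}$ and $\mathbf{f}$ (not neural networks specifically), no network-approximation step is needed once the variables are separated. First I would use the compactness of $\mathcal{A}$ in $C(\mathcal{D}_1)$: by the Arzel\`a--Ascoli theorem $\mathcal{A}$ is uniformly bounded and equicontinuous, so for any $\delta>0$ there is a finite sampling set $\{x_i\}_{i=1}^m\subset\mathcal{D}_1$ and a continuous reconstruction map $R_m:\mathbb{R}^m\to C(\mathcal{D}_1)$ (for instance a partition-of-unity interpolation subordinate to a $\delta$-net) with $\|R_m(\mathcal{Z}(a))-a\|_{C(\mathcal{D}_1)}<\delta$ for every $a\in\mathcal{A}$, where $\mathcal{Z}$ is the encoder of Lemma~\ref{lemma:discretization_operator}. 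Since $G$ is continuous on the compact set $\mathcal{A}$ it is uniformly continuous, so a small enough $\delta$ forces $\|G(a)-G(R_m(\mathcal{Z}(a)))\|_{C(\mathcal{D}_2)}<\epsilon/2$ uniformly in $a$, replacing $G$ by $\widetilde G:=G\circ R_m$, which factors through the compact set $K:=\mathcal{Z}(\mathcal{A})\subset\mathbb{R}^m$.

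Next I would treat $F(\mathbf{v},y):=\widetilde G(\mathbf{v})(y)$ as a single continuous function on the compact product $K\times\mathcal{D}_2$. Functions of the form $\sum_{k=1}^p\phi_k(\mathbf{v})\psi_k(y)$ form a subalgebra of $C(K\times\mathcal{D}_2)$ that contains the constants and separates points, so by Stone--Weierstrass it is dense; hence there exist continuous $\beta_k:K\to\mathbb{R}$ and $g_k:\mathcal{D}_2\to\mathbb{R}$, $k=1,\dots,p$, with $\bigl|F(\mathbf{v},y)-\sum_{k=1}^p\beta_k(\mathbf{v})g_k(y)\bigr|<\epsilon/2$ on $K\times\mathcal{D}_2$. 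Extending each $\beta_k$ continuously from $K$ to all of $\mathbb{R}^m$ via Tietze's theorem and setting $\mathbf{b}:=(\beta_1,\dots,\beta_p)$ and $\mathbf{f}:=(g_1,\dots,g_p)$ gives $\langle\mathbf{b}(\mathcal{Z}(a)),\mathbf{f}(y)\rangle=\sum_k\beta_k(\mathcal{Z}(a))g_k(y)$; combining the two $\epsilon/2$ estimates through the triangle inequality yields the stated bound, with $\mathbf{b}(a(x_1),\dots,a(x_m))$ as the branch and $\mathbf{f}(y)$ as the trunk.

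I expect the main obstacle to be the first reduction: producing a \emph{single} finite sampling set and a \emph{continuous} reconstruction $R_m$ accurate uniformly over all of $\mathcal{A}$ at once, rather than separately for each input function. This is exactly where compactness and equicontinuity are indispensable, and where one justifies that the finite branch input $(a(x_1),\dots,a(x_m))$ faithfully surrogates the whole function $a$. By comparison, the Stone--Weierstrass separation and the Tietze extension are routine, since they operate on genuine compact subsets of Euclidean space.
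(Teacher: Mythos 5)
The paper does not actually prove this lemma; it is quoted from \cite{Lu2021}, so the relevant comparison is with the classical Chen--Chen/Lu argument, and your skeleton --- sample at a finite net, reconstruct continuously, pass to a function on the compact product $K\times\mathcal{D}_2$, apply Stone--Weierstrass, extend by Tietze --- is essentially that argument, legitimately stripped of the network-approximation layer because the lemma only asks for \emph{continuous} $\mathbf{b}$ and $\mathbf{f}$. Your second stage is correct as written: sums $\sum_{k=1}^{p}\phi_k(\mathbf{v})\psi_k(y)$ do form a point-separating unital subalgebra of $C(K\times\mathcal{D}_2)$, and extending the $\beta_k$ from the compact set $K=\mathcal{Z}(\mathcal{A})\subset\mathbb{R}^m$ to $\mathbb{R}^m$ is routine. (Incidentally, the lemma's ``$\mathbf{f}:\mathbb{R}^{d_1}\to\mathbb{R}^p$'' is a typo for $\mathbb{R}^{d_2}$; your $g_k:\mathcal{D}_2\to\mathbb{R}$ is the right reading.)

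The genuine gap sits in the first reduction, precisely at the step you flagged as the main obstacle but then treated too quickly. You apply $G$ to $R_m(\mathcal{Z}(a))$, but $G$ is only defined on $\mathcal{A}$, and the partition-of-unity interpolant $R_m(\mathcal{Z}(a))=\sum_i a(x_i)\rho_i$ has no reason to lie in $\mathcal{A}$: $\mathcal{A}$ is an arbitrary compact set, not closed under interpolation. So $G(R_m(\mathcal{Z}(a)))$ is undefined, and uniform continuity of $G$ \emph{on} $\mathcal{A}$ controls nothing at points off $\mathcal{A}$; the same defect leaves $F(\mathbf{v},y):=\widetilde G(\mathbf{v})(y)$ undefined on part of $K\times\mathcal{D}_2$, and you cannot instead set $F(\mathcal{Z}(a),y):=G(a)(y)$ directly, since $\mathcal{Z}$ need not be injective on $\mathcal{A}$. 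The missing idea is to extend the operator before composing: $\mathcal{A}$ is closed in the metric space $C(\mathcal{D}_1)$ and $C(\mathcal{D}_2)$ is a Banach space, so Dugundji's extension theorem (the vector-valued Tietze theorem) gives a continuous $\overline{G}:C(\mathcal{D}_1)\to C(\mathcal{D}_2)$ with $\overline{G}|_{\mathcal{A}}=G$. This $\overline{G}$ need not be uniformly continuous globally, but continuity together with compactness of $\mathcal{A}$ yields, by a finite-subcover argument, a single $\delta>0$ such that $a\in\mathcal{A}$ and $\|u-a\|_{C(\mathcal{D}_1)}<\delta$ imply $\|\overline{G}(u)-G(a)\|_{C(\mathcal{D}_2)}<\epsilon/2$. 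Your equicontinuity estimate $\|R_m(\mathcal{Z}(a))-a\|_{C(\mathcal{D}_1)}<\delta$, which is correct as it stands, then makes $\widetilde G:=\overline{G}\circ R_m$ well defined and continuous on all of $\mathbb{R}^m$, and every subsequent step of your proposal goes through verbatim.
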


\begin{theorem}\label{theorem:1}
     Suppose that $X$ is a Banach space, $\mathcal{D}_1 \subset X$, $\mathcal{D}_2 \subset \mathbb{R}^{d_2}$ are two compact sets in $X$ and $\mathbb{R}^{d_2}$, respectively. Let $\mathcal{A}$ be a compact set in $C\left(\mathcal{D}_1\right)$, and assume that $G: \mathcal{A} \rightarrow C\left(\mathcal{D}_2\right)$ is a nonlinear continuous operator.   Then, for any $\epsilon>0$, there exist a positive integer $p$, a continuous  operator $\mathcal{G}_1: \mathcal{A} \rightarrow \mathbb{R}^p$, and a continuous vector function $\mathbf{f}: \mathbb{R}^{d_u} \rightarrow \mathbb{R}^p$, such that
$$
\left|G(a)(y)-\langle\mathcal{G}_1\left(a\right), \mathbf{f}(y)\rangle\right|<\epsilon,
$$
holds for all $a \in \mathcal{A}$ and $y \in \mathcal{D}_2$, where $\langle\cdot, \cdot\rangle$ denotes the dot product in $\mathbb{R}^p$. 
\end{theorem}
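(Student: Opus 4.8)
The plan is to reduce the statement directly to the DeepONet universal approximation result (Lemma~\ref{lemma:deeponet}) by absorbing the pointwise-evaluation step into the new operator $\mathcal{G}_1$. The crucial observation is that the only structural difference between the desired estimate and the one guaranteed by Lemma~\ref{lemma:deeponet} is that the branch input there is the finite-dimensional vector $(a(x_1), \dots, a(x_m))$, obtained by sampling $a$, whereas here I must instead supply a genuine operator $\mathcal{G}_1$ acting on the whole function $a \in \mathcal{A}$. I would bridge this gap by composing the branch network with the evaluation operator $\mathcal{Z}$ analyzed in Lemma~\ref{lemma:discretization_operator}.

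Concretely, I would first invoke Lemma~\ref{lemma:deeponet} for the given $G$, the sets $\mathcal{D}_1, \mathcal{D}_2, \mathcal{A}$, and the prescribed $\epsilon > 0$. This produces integers $m, p$, continuous maps $\mathbf{b}: \mathbb{R}^m \to \mathbb{R}^p$ and $\mathbf{f}: \mathbb{R}^{d_2} \to \mathbb{R}^p$, and sampling points $\{x_i\}_{i=1}^m \subset \mathcal{D}_1$ satisfying
$$
\left| G(a)(y) - \langle \mathbf{b}(a(x_1), \dots, a(x_m)), \mathbf{f}(y) \rangle \right| < \epsilon
$$
for all $a \in \mathcal{A}$ and $y \in \mathcal{D}_2$. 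I would retain exactly this $p$ and this trunk $\mathbf{f}$ for the conclusion of the theorem, so that no new approximation error is incurred.

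Next I would define the candidate operator as the composition $\mathcal{G}_1 := \mathbf{b} \circ \mathcal{Z}$, where $\mathcal{Z}: C(\mathcal{D}_1) \to \mathbb{R}^m$ is the evaluation operator $\mathcal{Z}(a) = (a(x_1), \dots, a(x_m))^T$ built from the points furnished above. By construction one has $\mathcal{G}_1(a) = \mathbf{b}(a(x_1), \dots, a(x_m))$, so that $\langle \mathcal{G}_1(a), \mathbf{f}(y) \rangle$ coincides identically with the DeepONet expression; the desired bound then follows verbatim from the displayed inequality. Thus the approximation estimate itself requires no fresh work once the substitution is made.

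The single genuinely substantive point, and the place where the claim is more than an algebraic rewriting, is to verify that $\mathcal{G}_1$ is a \emph{continuous} operator from $\mathcal{A}$ into $\mathbb{R}^p$. This is precisely what Lemma~\ref{lemma:discretization_operator} delivers: it guarantees that $\mathcal{Z}$ is a bounded, linear, and hence continuous operator on $C(\mathcal{D}_1)$, while $\mathbf{b}$ is continuous by Lemma~\ref{lemma:deeponet}; continuity of $\mathcal{G}_1$ then follows because a composition of continuous maps is continuous, and restriction to the subset $\mathcal{A}$ preserves this. I expect no real obstacle beyond this observation, since the two lemmas have been arranged exactly so that the theorem is their direct corollary, with Lemma~\ref{lemma:discretization_operator} supplying the regularity needed to promote the finite-dimensional branch into a bona fide operator on function space.
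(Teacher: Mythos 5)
Your proposal is correct and follows essentially the same route as the paper: invoke the DeepONet approximation result (Lemma~\ref{lemma:deeponet}) and set $\mathcal{G}_1 = \mathbf{b} \circ \mathcal{Z}$, with Lemma~\ref{lemma:discretization_operator} supplying the continuity of the evaluation operator $\mathcal{Z}$. If anything, you are slightly more careful than the paper, since you explicitly verify that $\mathcal{G}_1$ is continuous as a composition of continuous maps, a point the paper's proof leaves implicit.
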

\begin{proof}
    According to Lemma \ref{lemma:deeponet}, there exist two continuous functions $g:\mathbb{R}^m\rightarrow \mathbb{R}^p$, $f:\mathbb{R}^{d_1}\rightarrow \mathbb{R}^{p}$ such that 
    $$ 
    \left|G(a)(y)-\langle g(a(x_1), a(x_2), \cdots, a(x_m)), \mathbf{f}(y)\rangle\right|<\epsilon,
    $$
    where $\lbrace  x_i \in \mathcal{D}_1 \rbrace_{i=1}^m $ represent the set of $m$ points in $\mathcal{D}_1$.
    Let $\mathcal{G}_1 = g \circ \mathcal{Z} $, where $\mathcal{Z}$ is the discretization operator introduced in Lemma~\ref{lemma:discretization_operator}.
     Thus, we obtain following inequality,
     $$ \left|G(a)(y)-\langle g\circ \mathcal{Z}(a), \mathbf{f}(y)\rangle\right|= \left|G(a)(y)-\langle  \mathcal{G}_1(a), \mathbf{f}(y)\rangle\right|<\epsilon.
    $$
    
\end{proof}

Next, we introduce the universal approximation theorem of our RDO, where the key is that we use a neural operator to approximate the continuous operator $\mathcal{G}_1$ of the branch net of RDO. We follow the notations in \cite{DBLP:journals/jmlr/KovachkiLLABSA23}, and let $\displaystyle{\mathcal{NO}_n}$ denote a set of  $n$-layered neural operators.
For simplicity,  we denote $\mathcal{NO}$ as $\displaystyle{\cup_{i=2}^{+\infty} \mathcal{NO}_i}$. Next, we give the theorem on the universal approximation error of neural operator. 

\begin{lemma}\label{lemma:neural_operator}
    (\cite{DBLP:journals/jmlr/KovachkiLLABSA23}) Let $G_1: C(\mathcal{D}_1) \rightarrow C(\mathcal{D}_2)$ be a continuous operator, then for any compact set $\mathcal{A} \subset C(\mathcal{D}_1)$ and $0<\epsilon \leq 1$, there exists a neural operator $\mathcal{G} \in \mathcal{NO}$ such that
    \[
    \sup _{a \in \mathcal{A}}\left\|G_1(a)-\mathcal{G}(a)\right\|_{C(\mathcal{D}_2)}\leq \epsilon.
    \]
    \end{lemma}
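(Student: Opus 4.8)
The plan is to establish the result by factoring $G_1$ through a finite-dimensional latent space and then showing that every piece of the factorization lies in the neural operator class $\mathcal{NO}$. The skeleton of the factorization is already available: restricting $G_1$ to the compact set $\mathcal{A}$ and applying Lemma~\ref{lemma:deeponet} yields, for accuracy $\epsilon/2$, evaluation points $\{x_i\}_{i=1}^m\subset\mathcal{D}_1$ together with continuous maps $\mathbf{b}\colon\mathbb{R}^m\to\mathbb{R}^p$ and $\mathbf{f}\colon\mathcal{D}_2\to\mathbb{R}^p$ so that $G_1(a)(y)$ is within $\epsilon/2$ of $\langle\mathbf{b}(\mathcal{Z}(a)),\mathbf{f}(y)\rangle$, where $\mathcal{Z}$ is the bounded continuous discretization operator of Lemma~\ref{lemma:discretization_operator}. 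Thus $G_1$ decomposes, up to $\epsilon/2$, as an \emph{encoder} $\mathcal{Z}$, a finite-dimensional \emph{branch} $\mathbf{b}$, and a \emph{decoder} $y\mapsto\langle\,\cdot\,,\mathbf{f}(y)\rangle$ that reconstructs a function on $\mathcal{D}_2$.

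The remaining task is to emulate this three-stage pipeline within the neural operator format, in which the only admissible operations are an initial lifting $P$, a finite stack of layers of the form $v\mapsto\sigma(Wv(\cdot)+\int K(\cdot,z)v(z)\,dz+c)$, and a final projection $Q$. I would realize the branch $\mathbf{b}$ with the pointwise (local) part $Wv+c$ followed by the activation $\sigma$, using the standard feedforward universal approximation theorem to match $\mathbf{b}$ on the compact range of $\mathcal{Z}$. The decoder is realized by a kernel-integral layer whose kernel $K(y,z)$ is \emph{domain-changing}, integrating a latent signal supported over $\mathcal{D}_1$ against a kernel evaluated at $y\in\mathcal{D}_2$; choosing $K$ so that $\int_{\mathcal{D}_1}K(y,z)\,dz$ reproduces the basis functions $f_k(y)$ turns a constant-in-$z$ latent vector into the trunk expansion $\sum_k b_k f_k(y)$. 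This domain-changing kernel integral is precisely the feature of the general framework of~\cite{DBLP:journals/jmlr/KovachkiLLABSA23} that frees the construction from requiring $\mathcal{D}_1=\mathcal{D}_2$.

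The delicate piece is the encoder. Point evaluation $\mathcal{Z}$ is not itself a kernel-integral layer, so I would instead realize each coordinate $a(x_i)$ approximately as $\int_{\mathcal{D}_1}\rho_\delta(x_i-z)a(z)\,dz$ with a mollifier $\rho_\delta$. The error of this replacement is controlled by the \textbf{uniform equicontinuity} of $\mathcal{A}$, which holds because $\mathcal{A}$ is compact in $C(\mathcal{D}_1)$ (Arzel\`a--Ascoli): for $\delta$ small the mollified averages agree with the point values to within any prescribed tolerance, uniformly over $a\in\mathcal{A}$. Carrying these $m$ averages as constant feature channels makes them available to the subsequent pointwise branch layers. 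Composing the lifting, the mollified encoder layer, the branch layers, and the domain-changing decoder layer, and absorbing $P$ and $Q$ at the two ends, produces an element of $\bigcup_{i\ge2}\mathcal{NO}_i=\mathcal{NO}$.

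I expect the \textbf{main obstacle} to be controlling how the individual layer errors propagate through the composition while staying inside the rigid neural-operator format: because the stages are composed, the encoder error feeds into the branch and then into the decoder, so a naive triangle-inequality bound risks amplification. The remedy is to establish local Lipschitz (or at least uniform continuity) estimates for each realized stage on the relevant compact sets --- these follow from the boundedness of $\mathcal{Z}$ (Lemma~\ref{lemma:discretization_operator}), the smoothness of the chosen kernels, and the continuity of $\sigma$ --- so that a sufficiently accurate encoder forces the overall deviation down. Combining these realization errors (totaling at most $\epsilon/2$) with the $\epsilon/2$ from the DeepONet factorization then yields the claimed uniform bound $\sup_{a\in\mathcal{A}}\|G_1(a)-\mathcal{G}(a)\|_{C(\mathcal{D}_2)}\le\epsilon$.
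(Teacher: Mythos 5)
The paper does not actually prove this lemma: it is imported verbatim, with a citation, from Kovachki et al.~\cite{DBLP:journals/jmlr/KovachkiLLABSA23}, so there is no in-paper argument to compare yours against. What you have written is, in outline, a faithful reconstruction of the proof strategy in that cited reference: factor the operator through a finite-dimensional latent space (which you obtain via Lemma~\ref{lemma:deeponet} together with the encoder $\mathcal{Z}$ of Lemma~\ref{lemma:discretization_operator}), realize the encoder inside the neural-operator class by kernel-integral layers with $x$-independent kernels (your mollified point evaluations, uniformly controlled by the equicontinuity of the compact set $\mathcal{A}$, which Arzel\`a--Ascoli indeed supplies), realize the finite-dimensional branch map by pointwise MLP layers acting on constant channels, and realize the decoder by a single domain-changing kernel layer whose kernel has the rank-$p$ form $K(y,z)_k = f_k(y)/|\mathcal{D}_1|$. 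So your proposal is a sound sketch of the standard argument rather than a genuinely different route, and your identification of error propagation through the composition as the delicate point, resolved by uniform-continuity estimates on compact sets, is also how the rigorous version proceeds. A full write-up would still need three routine repairs that your sketch leaves implicit: normalize the mollifier near $\partial\mathcal{D}_1$, i.e., use $\int_{\mathcal{D}_1}\rho_\delta(x_i-z)a(z)\,dz \big/ \int_{\mathcal{D}_1}\rho_\delta(x_i-z)\,dz$, since evaluation points on the boundary otherwise lose mass; approximate $\mathbf{b}$ not merely on the compact range of $\mathcal{Z}$ but on a compact neighborhood of it (after a Tietze extension), because the mollified encoder's output lands only near that range; and arrange for values to pass unchanged through the activations interleaved between layers (e.g., the ReLU identity $t=\sigma(t)-\sigma(-t)$ with doubled channels), so that the final reconstruction is not corrupted by a nonlinearity. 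None of these is a genuine gap, and your argument in fact yields the conclusion for every $\epsilon>0$; the restriction $0<\epsilon\leq 1$ in the statement is an artifact of the quantitative formulation in the cited source.
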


\begin{lemma}\label{lemma:integral_operator_linear_bounded}
Let $C(\mathcal{D},\ \mathbb{R}^p)$ be a Banach space of all vector-valued continuous functions defined on the bounded domain $\mathcal{D}$ with the norm $||f||_{C(\mathcal{D},\mathbb{R}^p)}=\max_{x\in \mathcal{D}}||f(x)||_\infty$, where $f : x\in\mathcal{D}\rightarrow \mathbb{R}^p$. Then, the integral operator $\mathcal{T}$ defined in Equation~\eqref{eq:integral_equation} is a linear and bounded operator on $C(\mathcal{D},\ \mathbb{R}^p)$.
\end{lemma}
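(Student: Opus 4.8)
The plan is to establish the two claimed properties separately, following exactly the two-step template already used for the discretization operator $\mathcal{Z}$ in Lemma~\ref{lemma:discretization_operator}: first verify linearity directly from the definition, then produce an explicit operator-norm bound. Throughout I identify the integration domain $D_1$ in Equation~\eqref{eq:integral_equation} with the bounded domain $\mathcal{D}$ of the lemma, and I equip the codomain $\mathbb{R}^p$ with the $\|\cdot\|_\infty$ norm, matching the componentwise maximum structure used elsewhere in the paper.

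For linearity, I would take arbitrary $\phi^1,\phi^2\in C(\mathcal{D},\mathbb{R}^p)$ and a scalar $\alpha$, and invoke the linearity of the integral componentwise. Since $\mathcal{T}$ acts on each coordinate $\phi_{T,i}$ by $\int_{\mathcal{D}}\phi_{T,i}(x)\,dx$, the identities $\mathcal{T}(\phi^1+\phi^2)=\mathcal{T}(\phi^1)+\mathcal{T}(\phi^2)$ and $\mathcal{T}(\alpha\phi)=\alpha\,\mathcal{T}(\phi)$ hold row by row with no additional work. For boundedness, the key step is the elementary estimate, for each component $i\in\{1,\dots,p\}$,
\[
\left|\int_{\mathcal{D}}\phi_{T,i}(x)\,dx\right|
\le \int_{\mathcal{D}}|\phi_{T,i}(x)|\,dx
\le \int_{\mathcal{D}}\|\phi_T(x)\|_\infty\,dx
\le |\mathcal{D}|\cdot\|\phi_T\|_{C(\mathcal{D},\mathbb{R}^p)},
\]
where $|\mathcal{D}|$ denotes the Lebesgue measure of $\mathcal{D}$. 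Taking the maximum over $i$ yields $\|\mathcal{T}(\phi_T)\|_\infty\le |\mathcal{D}|\cdot\|\phi_T\|_{C(\mathcal{D},\mathbb{R}^p)}$, so $\mathcal{T}$ is bounded with operator norm at most $|\mathcal{D}|$.

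The only point requiring attention — and it is minor — is that the constant $|\mathcal{D}|$ is finite and that the componentwise integrals are well defined. Both follow from the standing hypotheses: $\mathcal{D}$ is bounded, hence contained in a sufficiently large ball and therefore of finite measure, while each $\phi_{T,i}$ is continuous and thus integrable over $\mathcal{D}$ (the use of $\max$ rather than $\sup$ in the norm implicitly reflects compactness of $\mathcal{D}$, which guarantees $\|\phi_T\|_{C(\mathcal{D},\mathbb{R}^p)}<\infty$). I do not anticipate any genuine obstacle; the argument is a direct vector-valued analogue of the boundedness estimate for $\mathcal{Z}$, with the domain measure $|\mathcal{D}|$ playing the role of the bounding constant. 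Since a bounded linear operator is automatically continuous, continuity comes for free and need not be argued separately.
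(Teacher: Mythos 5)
Your proposal is correct and follows essentially the same route as the paper's own proof: linearity is verified componentwise from the linearity of the integral, and boundedness is obtained via the same chain of elementary estimates yielding the operator-norm bound $|\mathcal{D}|$ (the paper merely orders the intermediate inequalities slightly differently, bounding $\int_{\mathcal{D}}|f_i(x)|\,dx$ by $|\mathcal{D}|\max_{x\in\mathcal{D}}|f_i(x)|$ before exchanging the two maxima). Your added remarks on well-definedness of the integrals and on continuity following automatically from boundedness are sound but do not change the argument.
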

   \begin{proof}
       For any $f,\ g\in C(\mathcal{D},\mathbb{R}^p)$ and scalars $\alpha,\ \beta$, we have
       \begin{align*}
        \mathcal{T}(\alpha f+\beta g) = \begin{bmatrix}
            \int_{D} (\alpha f_1(x)+\beta g_1(x)) dx \\
            \int_{D} (\alpha f_2(x)+\beta g_2(x)) dx \\
            \vdots\\
            \int_{D} (\alpha f_p(x)+\beta g_p(x)) dx
            \end{bmatrix}  
            = \alpha \mathcal{T}(f)+\beta \mathcal{T}(g).
       \end{align*}
       Thus, the operator $\mathcal{T}$ is a linear operator. 
       
       In addition, for any $f\in C(\mathcal{D},\ \mathbb{R}^p)$, we have
       \begin{align*}
        ||\mathcal{T}(f)||_\infty = \max_i |\int_{D} f_i(x) dx| &\leq   \max_i \int_{D} |f_i(x)| dx \leq \max_i  \max_{x\in\mathcal{D}} ( |D|  \cdot |f_i(x)| )\\
        &=|D| \max_{x\in\mathcal{D}} \max_i |f_i(x)|\\
        &= |D| \max_{x\in\mathcal{D}} ||f(x)||_\infty\\
        &= |D| \cdot ||f||_{C(\mathcal{D},\mathbb{R}^p)},
       \end{align*}
       which implies that the operator $\mathcal{T}$ is a bounded linear operator.
   \end{proof}

\begin{lemma}\label{lemma:integral_operator}
     Let $G_1: C(\mathcal{D}_1) \rightarrow C(\mathcal{D}_2)$ be a continuous operator, then for any compact set $\mathcal{A}\subset C\left(\mathcal{D}_1\right)$ and $0<\epsilon\leq\min(4,\ 4||\mathcal{T}||)$, there exists a neural operator $\mathcal{G}_0\in \mathcal{NO}$ such that 
    \[
    \sup_{a\in \mathcal{A}}  ||\mathcal{T}\circ\mathcal{G}_0(a)-G_1(a)|| \leq \epsilon,
    \]
where $\mathcal{T}$ is the integral operator defined by Equation~\eqref{eq:integral_equation} and $||\mathcal{T}||$ is its corresponding norm on $C\left(\mathcal{D}_1\right)$.
\end{lemma}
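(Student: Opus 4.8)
The plan is to reduce the statement to the universal approximation property of neural operators (Lemma~\ref{lemma:neural_operator}) together with the boundedness of the integral operator $\mathcal{T}$ (Lemma~\ref{lemma:integral_operator_linear_bounded}). Since $\mathcal{T}\circ\mathcal{G}_0(a)$ lands in $\mathbb{R}^p$, the approximation is understood in $\mathbb{R}^p$, i.e. the relevant target is the $\mathbb{R}^p$-valued branch operator $\mathcal{G}_1$ supplied by Theorem~\ref{theorem:1}, and I read $G_1$ here as that $\mathbb{R}^p$-valued continuous operator. The conceptual difficulty is that $\mathcal{T}$ collapses an entire vector-valued function to a single vector in $\mathbb{R}^p$, so it has no inverse and one cannot simply push $G_1$ through $\mathcal{T}^{-1}$ and approximate the result. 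My strategy is instead to exhibit an explicit continuous \emph{right inverse} of $\mathcal{T}$ whose range is easy for a neural operator to approximate.

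First I would define an auxiliary operator $H: C(\mathcal{D}_1) \to C(\mathcal{D}_1,\mathbb{R}^p)$ sending each $a$ to the function that is constant in $x$, namely $H(a)(x) := \frac{1}{|\mathcal{D}_1|}\,G_1(a)$, where $|\mathcal{D}_1|$ denotes the measure of $\mathcal{D}_1$. By construction $\mathcal{T}(H(a)) = \int_{\mathcal{D}_1} \frac{1}{|\mathcal{D}_1|}\,G_1(a)\,dx = G_1(a)$, so $\mathcal{T}\circ H = G_1$ exactly. Because $G_1$ is continuous and $H$ merely rescales its value and regards it as a constant function, $H$ is itself a continuous operator between the stated Banach spaces: if $a_n \to a$ then $G_1(a_n)\to G_1(a)$ in $\mathbb{R}^p$, hence $H(a_n)\to H(a)$ in the sup norm (the two norms agree for constant functions).

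Next I would invoke the vector-valued form of Lemma~\ref{lemma:neural_operator} applied to $H$ with output domain $\mathcal{D}_1$: for a tolerance $\delta\in(0,1]$ to be fixed, there exists $\mathcal{G}_0\in\mathcal{NO}$ with $\sup_{a\in\mathcal{A}}\|H(a)-\mathcal{G}_0(a)\|_{C(\mathcal{D}_1,\mathbb{R}^p)}\le\delta$. (Lemma~\ref{lemma:neural_operator} is phrased for scalar outputs, but neural operators act on $\mathbb{R}^p$-valued, multichannel functions, so the statement applies verbatim, or componentwise.) Linearity and boundedness of $\mathcal{T}$ then give, for every $a\in\mathcal{A}$,
\[
\|\mathcal{T}\circ\mathcal{G}_0(a) - G_1(a)\| = \|\mathcal{T}\bigl(\mathcal{G}_0(a) - H(a)\bigr)\| \le \|\mathcal{T}\|\,\|\mathcal{G}_0(a) - H(a)\| \le \|\mathcal{T}\|\,\delta .
\]

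Finally I would pick $\delta$ so that both $\|\mathcal{T}\|\,\delta\le\epsilon$ and $\delta\le1$ hold; the natural choice $\delta = \epsilon/(4\|\mathcal{T}\|)$ yields $\|\mathcal{T}\|\,\delta = \epsilon/4 \le \epsilon$, and the hypothesis $0<\epsilon\le\min(4,4\|\mathcal{T}\|)$ is precisely what guarantees this $\delta$ lies in $(0,1]$ regardless of whether $\|\mathcal{T}\|$ exceeds one or not. Taking the supremum over $a\in\mathcal{A}$ then gives $\sup_{a}\|\mathcal{T}\circ\mathcal{G}_0(a)-G_1(a)\|\le\epsilon$, as claimed. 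I expect the only genuinely nontrivial step to be the first: realizing that the non-invertibility of $\mathcal{T}$ is harmless because the constant-function right inverse $H$ is simultaneously exact ($\mathcal{T}\circ H=G_1$) and within the approximation reach of a neural operator; the remaining steps are routine bookkeeping with the operator norm of $\mathcal{T}$.
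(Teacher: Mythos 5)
Your proof is correct, but it takes a genuinely different route from the paper's. The paper argues via a three-term triangle inequality: it first takes a neural operator $\mathcal{G}$ with $\sup_{a\in\mathcal{A}}\|\mathcal{G}(a)-G_1(a)\|\le \epsilon/4$ (Lemma~\ref{lemma:neural_operator}), then \emph{asserts} the existence of an operator $G$ satisfying $\|\mathcal{T}\circ G(a)-G_1(a)\|\le\epsilon/4$, approximates that $G$ by a second neural operator $\mathcal{G}_0$ to tolerance $\epsilon/(4\|\mathcal{T}\|)$, and sums the pieces to get $\epsilon$ --- which is where both halves of the hypothesis $\epsilon\le\min(4,\,4\|\mathcal{T}\|)$ enter, since both applications of Lemma~\ref{lemma:neural_operator} need their tolerance in $(0,1]$. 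The existence of $G$ is the one step the paper does not justify, and your constant-function right inverse $H(a)(x)=G_1(a)/|\mathcal{D}_1|$ is exactly the missing construction: it satisfies $\mathcal{T}\circ H=G_1$ with \emph{zero} error, which lets you collapse the whole argument to a single application of (the vector-valued form of) Lemma~\ref{lemma:neural_operator} followed by one use of the boundedness of $\mathcal{T}$ from Lemma~\ref{lemma:integral_operator_linear_bounded}. Your route is shorter, type-safe (the paper's intermediate $\mathcal{G}$ is compared against the $\mathbb{R}^p$-valued $G_1$ without comment, and you rightly read $G_1$ as the branch operator of Theorem~\ref{theorem:1} for the composition $\mathcal{T}\circ\mathcal{G}_0$ to typecheck), and it yields the sharper bound $\epsilon/4$. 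It also exposes that the hypothesis is stronger than needed on your path: only $\epsilon\le 4\|\mathcal{T}\|$ is used to get $\delta=\epsilon/(4\|\mathcal{T}\|)\in(0,1]$, so your remark that the full hypothesis is ``precisely'' what is required is a slight overstatement --- the condition $\epsilon\le 4$ is an artifact of the paper's two-stage approximation and is simply inactive in your argument (indeed, choosing $\delta=\min\{1,\,\epsilon/\|\mathcal{T}\|\}$ would remove the restriction altogether). One small caveat worth stating explicitly: the construction needs $|\mathcal{D}_1|>0$, which holds since $\mathcal{D}_1$ is a bounded domain.
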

\begin{proof}
According to Lemma~\ref{lemma:neural_operator}, there exists a neural operator $\mathcal{G}$ that satisfies 
\begin{equation}\label{eqn:1b}
\sup_{a\in \mathcal{A}} ||\mathcal{G}(a) - G_1(a)|| \leq \frac{\epsilon}{4},
\end{equation}
with $0<\epsilon\leq 4$.
    Then,
    \begin{align*}
        ||\mathcal{T}\circ\mathcal{G}_0(a)-G_1(a)|| &= || \mathcal{T}\circ\mathcal{G}_0(a)-\mathcal{G}(a)+\mathcal{G}(a)-G_1(a) ||\\
        &\leq || \mathcal{T}\circ\mathcal{G}_0(a)-\mathcal{G}(a)|| + ||\mathcal{G}(a)-G_1(a) ||.
        \end{align*}
    We can thus obtain that   
    \begin{align*}
        \sup_{a\in \mathcal{A}}  ||\mathcal{T}\circ\mathcal{G}_0(a)-G_1(a)|| \leq \frac{\epsilon}{4} + \sup_{a\in \mathcal{A}} || \mathcal{T}\circ\mathcal{G}_0(a)-\mathcal{G}(a)||.
    \end{align*}
    Since $\mathcal{T}$ is a bounded linear operator, for the continuous operator $G_1$, there exists an operator $G$ such that 
    \[
    ||\mathcal{T}\circ G(a) - G_1(a) ||\leq \frac{\epsilon}{4},
    \]
and
    \begin{align*}
        || \mathcal{T}\circ\mathcal{G}_0(a)-\mathcal{G}(a)|| &= || \mathcal{T}\circ\mathcal{G}_0(a)-\mathcal{T}\circ G(a) + \mathcal{T}\circ G(a) - \mathcal{G}(a) || \\
        &\leq  || \mathcal{T}\circ\mathcal{G}_0(a)-\mathcal{T}\circ G(a)|| + || \mathcal{T}\circ G(a) - \mathcal{G}(a) ||\\
        &\leq  || \mathcal{T} || ||\mathcal{G}_0(a) - G(a)|| + || \mathcal{T}\circ G(a) - \mathcal{G}(a) ||
    \end{align*}
    Therefore,  
    \[
    || \mathcal{T}\circ G(a) - \mathcal{G}(a) || \leq ||
     \mathcal{T}\circ G(a) - G_1(a) ||+||G_1(a) -\mathcal{G}(a)|| \leq \frac{\epsilon}{4} + \frac{\epsilon}{4} =\frac{\epsilon}{2},
     \]
     where the second inequality is obtained via Equation~\eqref{eqn:1b}.
    Thus,
    \[
    \sup_{a\in \mathcal{A}}  ||\mathcal{T}\circ\mathcal{G}_0(a)-G_1(a)|| \leq \frac{3\epsilon}{4} + ||\mathcal{T}|| || \mathcal{G}_0(a)  - G(a)||. 
     \]
    According to Lemma~\ref{lemma:neural_operator}, there exists a neural operator $\mathcal{G}_0(a)$ with $0<\epsilon\leq4\|\mathcal{T}\|$ such that 
    \[
     \|\mathcal{G}_0(a) - G(a)\| \leq \frac{\epsilon}{4 ||\mathcal{T}||},
     \]
      which in turn yields,
      \[
       \sup_{a\in \mathcal{A}}  ||\mathcal{T}\circ\mathcal{G}_0(a)-G_1(a)|| \leq \epsilon.
      \]
\end{proof}

As pointed out in~\cite{DBLP:journals/jmlr/KovachkiLLABSA23}, the results given by Lemma~\ref{lemma:neural_operator} can be straightforward generalized to vector-valued settings. Therefore, our results in Lemma~\ref{lemma:integral_operator} also generalize straightforward to vector-valued settings.

\begin{theorem} \label{theorem:2}
    Let $X$ be a Banach space, $\mathcal{D}_1 \subset X$ and $\mathcal{D}_2 \subset \mathbb{R}^{d_2}$ be two compact sets in $X$ and $\mathbb{R}^{d_2}$, respectively. Given  a nonlinear continuous operator $G: \mathcal{A} \rightarrow C\left(\mathcal{D}_2\right)$, where  $\mathcal{A}$ is a compact set in $C\left(\mathcal{D}_1\right)$. For any $0<\epsilon \leq \min\{4M,\ 4M\|\mathcal{T}\|\}$, there exists a positive integer $p$, a continuous function $\mathbf{f}: \mathbb{R}^{d_2} \rightarrow \mathbb{R}^p$, and a continuous operator $\mathcal{G}_0 \in \mathcal{NO}$ such that we can construct a continuous operator $\mathcal{T}\circ\mathcal{G}_0: \mathcal{A} \rightarrow \mathbb{R}^p$ with
\[
\sup_{a\in \mathcal{A}}  \left|G(a)(y)-\langle\mathcal{T}\circ\mathcal{G}_0\left(a\right), \mathbf{f}(y)\rangle\right|<\epsilon,
\]
holds for all $a \in \mathcal{A}$ and $y \in \mathcal{D}_2$. Here $M$ represents the upper bound of $\mathbf{f}$ in $\mathcal{C}(D_2)$, $\mathcal{T}$ is the integral operator in Equation~\eqref{eq:integral_equation}, and $\langle\cdot, \cdot\rangle$ denotes the dot product in $\mathbb{R}^p$. 
\end{theorem}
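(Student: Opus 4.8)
The plan is to chain Theorem~\ref{theorem:1}, which supplies the finite-dimensional branch operator $\mathcal{G}_1$ together with the trunk $\mathbf{f}$, with the vector-valued version of Lemma~\ref{lemma:integral_operator}, which lets the composite $\mathcal{T}\circ\mathcal{G}_0$ approximate $\mathcal{G}_1$. The whole argument is a two-step triangle-inequality estimate once the tolerances are apportioned correctly.

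First I would apply Theorem~\ref{theorem:1} at tolerance $\epsilon/2$ to obtain a positive integer $p$, a continuous operator $\mathcal{G}_1:\mathcal{A}\to\mathbb{R}^p$, and a continuous function $\mathbf{f}:\mathbb{R}^{d_2}\to\mathbb{R}^p$ with
\[
\sup_{a\in\mathcal{A},\,y\in\mathcal{D}_2}\bigl|G(a)(y)-\langle\mathcal{G}_1(a),\mathbf{f}(y)\rangle\bigr|<\frac{\epsilon}{2}.
\]
Since $\mathbf{f}$ is continuous and $\mathcal{D}_2$ is compact, $M:=\sup_{y\in\mathcal{D}_2}\|\mathbf{f}(y)\|$ is finite; this is exactly the constant $M$ in the statement, and it is the quantity I would use to convert an error in $\mathbb{R}^p$ into an error in the pairing $\langle\cdot,\mathbf{f}(y)\rangle$.

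Next I would replace $\mathcal{G}_1$ by $\mathcal{T}\circ\mathcal{G}_0$. Viewing $\mathcal{G}_1$ as a continuous $\mathbb{R}^p$-valued operator (equivalently, a $C(\mathcal{D}_2,\mathbb{R}^p)$-valued operator whose outputs are constant functions), the vector-valued generalization of Lemma~\ref{lemma:integral_operator} — whose validity is recorded in the remark following that lemma — yields a neural operator $\mathcal{G}_0\in\mathcal{NO}$ with
\[
\sup_{a\in\mathcal{A}}\|\mathcal{T}\circ\mathcal{G}_0(a)-\mathcal{G}_1(a)\|\leq\frac{\epsilon}{2M}.
\]
Here the hypothesis $0<\epsilon\leq\min\{4M,\,4M\|\mathcal{T}\|\}$ is exactly what guarantees that the requested accuracy $\epsilon/(2M)$ falls in the admissible range $(0,\min\{4,4\|\mathcal{T}\|\}]$ demanded by Lemma~\ref{lemma:integral_operator}, so the lemma is applicable. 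Combining the two estimates by the triangle inequality and the Cauchy--Schwarz bound for the dot product gives
\[
\bigl|G(a)(y)-\langle\mathcal{T}\circ\mathcal{G}_0(a),\mathbf{f}(y)\rangle\bigr|\leq\bigl|G(a)(y)-\langle\mathcal{G}_1(a),\mathbf{f}(y)\rangle\bigr|+\|\mathcal{G}_1(a)-\mathcal{T}\circ\mathcal{G}_0(a)\|\,\|\mathbf{f}(y)\|,
\]
whose first term is below $\epsilon/2$ and whose second term is at most $(\epsilon/2M)\cdot M=\epsilon/2$, uniformly in $a\in\mathcal{A}$ and $y\in\mathcal{D}_2$, which is the claim.

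I expect the middle step to be the real obstacle. Lemma~\ref{lemma:integral_operator} is phrased for operators valued in $C(\mathcal{D}_2)$, whereas the target $\mathcal{G}_1$ produced by Theorem~\ref{theorem:1} is valued in the finite-dimensional space $\mathbb{R}^p$; I must justify the reduction to a finite-dimensional codomain through the constant-function embedding and the stated vector-valued extension. The second delicate point is bookkeeping of the constants: the $M$-dependence of the inner-product estimate must be tracked so that the tolerance $\epsilon/(2M)$ fed into Lemma~\ref{lemma:integral_operator} telescopes back to precisely $\epsilon$, and one must confirm the norm used in the Cauchy--Schwarz step is dual to the one defining $M$ (absorbing any dimensional factor into the definition of $M$ if a different norm on $\mathbb{R}^p$ is adopted).
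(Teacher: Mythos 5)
Your proposal is correct and follows essentially the same route as the paper's own proof: both chain Theorem~\ref{theorem:1} with the (vector-valued) Lemma~\ref{lemma:integral_operator} via the triangle inequality and the Cauchy--Schwarz bound $\|\mathbf{f}\|\leq M$, the only difference being that you split the tolerance evenly as $\epsilon/2+\epsilon/2$ while the paper keeps a generic $\epsilon_1$ and requests accuracy $(\epsilon-\epsilon_1)/M$ from the lemma. Your closing caveats (the reduction of the lemma to an $\mathbb{R}^p$-valued target and the constant bookkeeping) are in fact handled no more rigorously in the paper, which relies on the same remark about vector-valued generalization.
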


\begin{proof}
    According to Theorem \ref{theorem:1}, for any $\epsilon_1 >0$, there exist a continuous operator ${G}_1$ and a continuous function $\mathbf{f}$ such that
    \[
    \left|G(a)(y)-\langle G_1(a), \mathbf{f}(y) \rangle\right|\leq \epsilon_1.
    \]
    Then,
    \begin{align*}
      \left|G(a)(y)-\langle\mathcal{T}\circ\mathcal{G}_0\left(a\right), \mathbf{f}(y)\rangle\right| &\leq 
        \left|G(a)(y)-\langle G_1(a), \mathbf{f}(y) \rangle\right|+\left|\langle G_1(a), \mathbf{f}(y) \rangle-\langle\mathcal{T}\circ\mathcal{G}_0\left(a\right), \mathbf{f}(y)\rangle\right| \\
        &\leq  \epsilon_1 +\left|\langle G_1(a)-\mathcal{T}\circ\mathcal{G}_0\left(a\right), \mathbf{f}(y) \rangle\right| \\
        &\leq \epsilon_1 + || G_1(a)-\mathcal{T}\circ\mathcal{G}_0\left(a\right)|| ||\mathbf{f}  ||.
    \end{align*}
Therefore,
\[
\sup_{a\in \mathcal{A}}  \left|G(a)(y)-\langle\mathcal{T}\circ\mathcal{G}_0\left(a\right), \mathbf{f}(y)\rangle\right| \leq  \epsilon_1 + ||\mathbf{f}  || \cdot \sup_{a\in \mathcal{A}}  || G_1(a)-\mathcal{T}\circ\mathcal{G}_0\left(a\right)||. 
\]
Let $||\mathbf{f}||= \max_{y\in \mathcal{D}_2} |\mathbf{f}(y)|$ be bounded by a constant $M$. According to Lemma~\ref{lemma:integral_operator}, there exists a continuous operator 
 $\mathcal{G}_0\in \mathcal{NO}$ such that 
 \[
 ||G_1(a)-\mathcal{T}\circ\mathcal{G}_0\left(a\right)|| \leq \frac{(\epsilon-\epsilon_1)}{M},
 \]
for $\epsilon_1<\epsilon\leq \min\{4M,\ 4M\|\mathcal{T}\|\}$.
Finally, we obtain
\[
\left|\mathcal{G}(a)(y)-\langle\mathcal{T}\circ\mathcal{G}_0\left(a\right), \mathbf{f}(y)\rangle\right| \leq \epsilon_1 + \frac{(\epsilon-\epsilon_1)}{M} M =\epsilon.
\]
\end{proof}


\subsection{Parameterization and implementation of RDO}\label{sec:implementation_RDO}
In RDO, we construct parametric models to approximate the operator $\mathcal{G}_0$ and the vector-valued function $\mathbf{f}$. Denote their corresponding parametric models by $\mathcal{G}_{0,\theta}$ and $\mathbf{f}_\theta$, respectively, where the subscript $\theta$ represents the parameters. 
In this paper, $\mathbf{f}_\theta$ is simply parameterized by FNN and we denote RDO by $\mathcal{G}:=\langle\mathcal{T}\circ\mathcal{G}_{0,\theta}\left(a\right), \mathbf{f}_\theta(y)\rangle$, where $\mathcal{G}_{0,\theta}$ is parameterized by multiple  parametric kernel integral transformation. 
Here, the kernel integral transformation $\mathcal{K}_\theta$ is given by 
\begin{equation}
    z(x) = \mathcal{K}_\theta(\phi)(x) =\int_\mathcal{D} \kappa_\theta(x,y)\phi(y)dy, \qquad x\in \mathcal{D} \label{eq:kernel_integration}
\end{equation}
where $\kappa_\theta(\cdot,\cdot)$ is the kernel function. Since it is intractable to compute the integral transformation directly, there are various algorithms proposed to approximate this kernel integral transformation~\cite{DBLP:journals/jmlr/KovachkiLLABSA23,DBLP:journals/jmlr/OngSY22}. 
Here, we introduce two widely used approaches, which are the keys of FNO and the attention mechanisms~\cite{vaswani2017attention, DBLP:journals/corr/BahdanauCB14}.

\subsubsection{Fourier integral operator}\label{sec:fourier_integral_operator}

By rewriting $\kappa_\theta(x,y)$ as $\kappa_\theta(x-y)$, Equation~\eqref{eq:kernel_integration} represents the convolution of $\phi(y)$ which could be computed using the fast Fourier transformation (FFT) represented by $\mathcal{F}\{\cdot\}$ and the inverse fast Fourier transformation (iFFT) represented by $\mathcal{F}^{-1}\{\cdot\}$. The process is given by
\begin{equation}\label{eqn:fft}
\mathcal{F}\{\mathcal{K}_\theta(\phi)\} = \mathcal{F}\{\int_\mathcal{D} \kappa_\theta(x-y)\phi(y)dy\} = \mathcal{F}\{\kappa_\theta(x-y)\}\diamond\mathcal{F}\{\phi(y)\},
\end{equation}
where $\diamond$ refers to the convolution operation. We can compute the result of Equation~\eqref{eq:kernel_integration} by applying the iFFT to Equation~\eqref{eqn:fft}, which gives
$$\mathcal{K}_\theta(\phi)(x) = \mathcal{F}^{-1}\big\{\mathcal{F}\{\kappa_\theta(x-y)\}\diamond\mathcal{F}\{\phi(y)\}\big \}.$$

In FNO, the kernel function $ \kappa_\theta$ is directly parameterized in the frequency domain and denote $\mathcal{F}\{\kappa_\theta(x-y)\}$ as $\mathcal{R}_\theta$. The complex-valued vector $\mathcal{F}\{\phi(y)\}(k)\in \mathbb{C}^{d_\phi }$ refers to  the $k$-th frequency response mode of the input function $\phi(\cdot)$. 
The complex-valued matrix $\mathcal{R}_\theta(k)\in \mathbb{ C}^{d_\phi \times d_\phi}$  represents the $k$-th frequency of the kernel function $\mathcal{F}\{\kappa_\theta(\cdot)\}(k)$. 
For efficient computations, the frequencies higher than $k$ are truncated. 
To retain the high frequency information of the input function $\phi(\cdot)$ and the kernel function $\kappa_\theta(\cdot)$, a residual term is added. Finally, the Fourier integral operator (FIO) is given by 
\[
z(x) = \hat{\mathcal{F}}(\phi)(x) := \sigma\textbf{(} W_\theta \phi(x) +   \mathcal{K}_\theta(\phi)(x)\textbf{)} ,
\]
where $\sigma(\cdot)$ is a nonlinear activation function and $\mathcal{\widehat{F}}$ represents the approximated transformation of FIO. This procedure is illustrated by Figure~\ref{fig:FIO}.
\begin{figure}[H]
    \centering
    \begin{center}
            \includegraphics[width=0.6\linewidth]{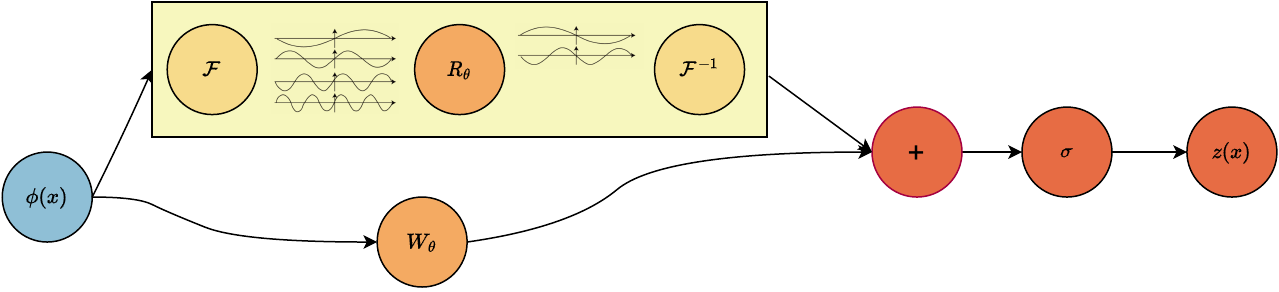}
    \end{center}
    \vspace{-.2cm}
    \caption{FIO architecture.}\label{fig:FIO}
\end{figure}

\subsubsection{Attention integral operator}
The attention mechanism has demonstrated its advantage over other deep learning methods in Natural Language Processing. Unlike the traditional sequence models which always lose the information from a long time ago, such as RNN, LSTM etc., its scaled dot-product attention mechanism can unearth the latent information of the total sequence. As pointed by~\cite{cao2021choose}, the Attention mechanism is equivalent to a variant of neural operator. Furthermore, the attention mechanism without softmax transformation has a resemblance with a Fourier-type kernel integral transform~\cite{abfa1ea9-6eb2-3147-9b71-463685fe302c}. \citet{DBLP:journals/jmlr/KovachkiLLABSA23} explained that the popular transformer architecture is a specific kind of neural operator, named Attention Integral Operator (AIO) in this work.

Let $q_\theta, k_\theta$ and $v_\theta$ be the point-wise nonlinear transformations for three feature mappings $q,\ k,\ v:\mathcal{D}\rightarrow \mathbb{R}^{d}$, where $q(x):= q_\theta(\phi(x)), k(x):= k_\theta(\phi(x))$ and $v(x) := v_\theta(\phi(x))$ for each $x\in \mathcal{D}$ and the input function $\phi(x)$. The continuous version of the transformer is given by
  \begin{align}\label{eq:AIO}
    z(x) =  \frac{\int_\mathcal{D}\exp\{q(x)\cdot k(y)\} v(y)dy}{\int_\mathcal{D}\exp\{q(x)\cdot k(y)\}dy} =\int_\mathcal{D}\frac{\exp\{q(x)\cdot k(y)\}}{\int_\mathcal{D}\exp\{q(x)\cdot k(y)\}dy} v(y)dy.
  \end{align}
Let $(\phi(x_1),\phi(x_2),\dots,\phi(x_m))^T\subset \mathbb{R}^{m\times d_\phi}$ be the input sequences where $m$ is the number of grid points in the domain $\mathcal{D}$. Then, Equation~\eqref{eq:AIO} is approximated  through the following summation (see Figure~\ref{fig:AIO}),
\[
z(x_i)\approx \sum^{m}_{j=1} \frac{\exp\{q_i \cdot k_j\}\cdot v_j}{\sum^{m}_{l=1} \exp\{q_i\cdot k_l\}},
\]
where $q_i,\ k_i,\ v_i$ are $q(x),k(x)$ and $v(x)$ being evaluated at $x=x_i$, respectively. 
\begin{figure}[H]
    \begin{center}
            \includegraphics[width=0.6\linewidth]{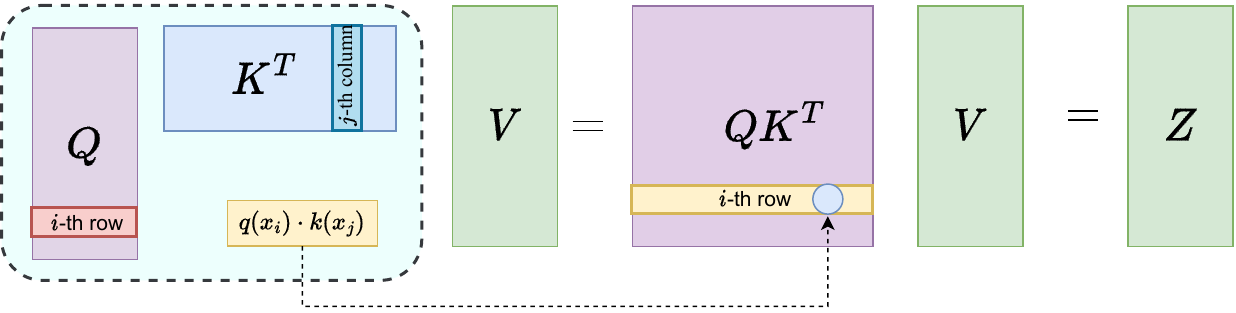}
    \end{center}
    \vspace{-.2cm}
    \caption{AIO architecture.}\label{fig:AIO}
\end{figure}


\subsection{Architecture of RDO}

In this subsection, we provide a feasible scheme of RDO where FIO and AIO are used to approximate the nonlinear operator $\mathcal{G}_{0}$. The complete structure of RDO is shown in Figure~\ref{fig:structure}. Here, $T_1$ is the number of FIO layers and $T_2$ is the number of AIO layers. $\mathcal{P}_0$, $\mathcal{P}_1$, and $\mathcal{P}_2$ are point-wise nonlinear transformations. 
\begin{figure}[H]
    \begin{center}\includegraphics[width=.7 \linewidth]{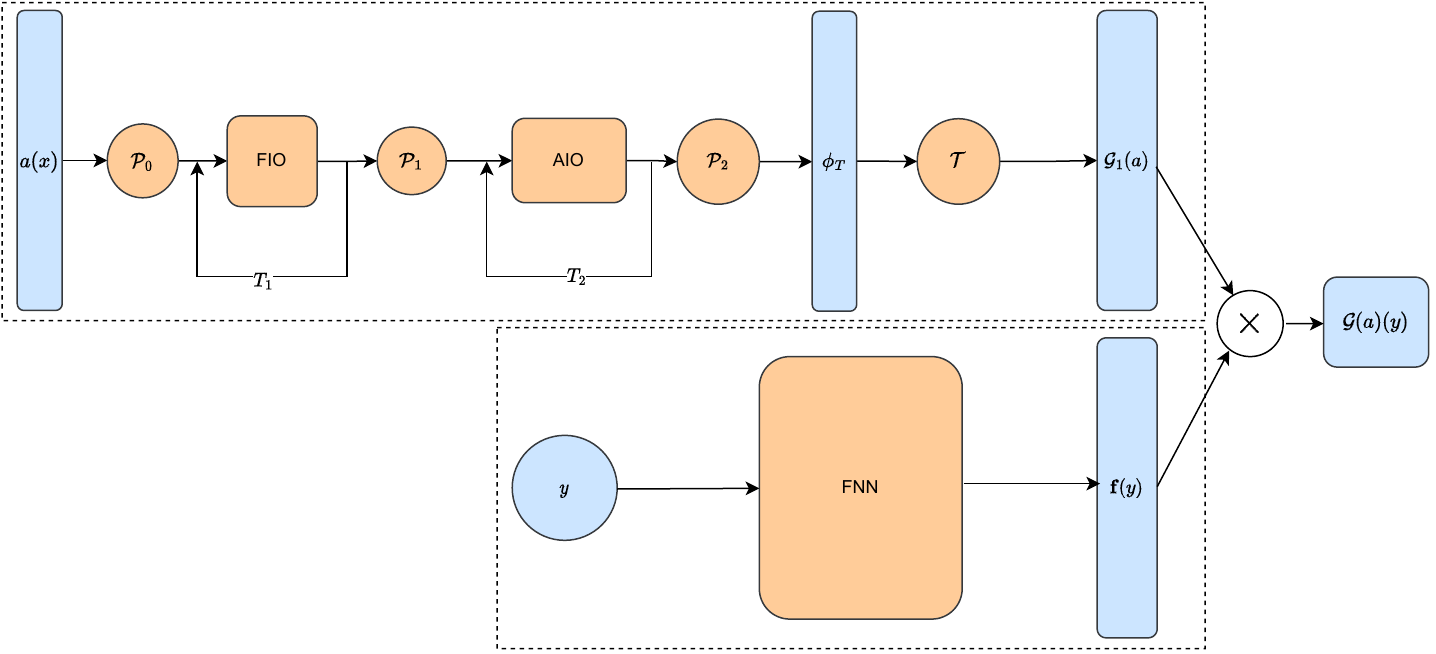}
        \end{center}
        \caption{RDO architecture.}\label{fig:structure}
    \end{figure}
    
Let $d_{\phi_i}$ represent the dimension of the output function $\phi_i$ of the $i$-th layer where $i\in\{0,\dots,T_1,T_1+1,\dots,T_1+T_2+2\}$. Specifically, $\phi_0 =\mathcal{P}_0(a(x))\in\mathbb{R}^{d_{\phi_0}}$, $\phi_{T_1} =\hat{\mathcal{F}}(\phi_{T_1-1}(x))\in\mathbb{R}^{d_{\phi_{T_1}}}$, and $\phi_{T}=\mathcal{G}_0(a(x)) =\mathcal{P}_{2}(\phi_{T_1+T_2+1})\in\mathbb{R}^{d_{\phi_{T_1+T_2+2}}}$ with $T=T_1+T_2+2$.  In practical implementations, $d_{\phi_0}=\dots=d_{\phi_{T_1+1}} =\dots=d_{\phi_{T_1+T_2+1}}$ and $p=d_{\phi_{T_1+T_2+2}}$ are the common choices. The integral transformation $\mathcal{T}$ is approximated by 
 \[
 \mathcal{T}(\phi_{T}) \approx h \sum_i^m \phi_{T}(x_i), 
 \]
where $h$ is the size of the uniform discretization $\{x_i\}_{i=1}^m\in \mathcal{D}_1$. Finally, the corresponding loss function is computed via
\[
\mathbbm{E}_{a\sim\pi}[C({G}(a),\mathcal{G}(a))] \approx \mathbbm{E}_{a\sim\pi}[\frac{1}{n}
\sum_{i=1}^n (G(a)(y_i)-\mathcal{G}(a)(y_i))^2,
\]
where the sequence $\{y_i\}_{i=1}^n$ represents the discretized points in the solution domain $\mathcal{D}_2$.

\section{Numerical experiments}\label{sec:Numerical}

In this section, we use three numerical examples to demonstrate the efficiency of RDO to approximate the PDE solution operator and compare its performance with baseline models, i.e., DeepONet and FNO. Specifically, the branch net and the trunk net of DeepONet are both FNNs. All the networks are constructed by \textit{PyTroch}~\cite{paszke2019pytorch} and trained by the Adam optimizer~\cite{DBLP:journals/corr/KingmaB14} with an initial learning rate of $10^{-3}$. 
In order to find the optimal parameters of the neural networks, the initial learning rate is configured at $0.001$ and decays by 0.5 each 100 epochs over all numerical examples. For fairness, all the parametric models referred in this paper share the same training strategies.

To improve the generalization capability of parametric models, we use the early stopping technique introduced in~\cite{FINNOFF1993771} to train our models, which saves the parameters of neural networks which perform best in the validation set rather than the parameters trained in the final epoch. Meanwhile, the dataset is split into three disjoint subsets, i.e., a training dataset, a validation dataset, and a test dataset. In this paper, the ratio of these datasets is $6:\ 2:\ 2$. 

We utilise the relative $L_2$ norm error to evaluate the performance of different methods, which is given by
\[
RL2E = \frac{||\mathcal{Z}(\hat{u})-\mathcal{Z}(u)||_2}{||\mathcal{Z}(u)||_2}.
\]
Here $\hat{u}$ is the approximate solution, $u$ is the groundtruth, and $\mathcal{Z}$ is the discretization operator in Lemma~\ref{lemma:discretization_operator}. 

\subsection{Stochastic boundary value problem (SVBP)}\label{sec:4_1}
We consider a 1-D elliptic stochastic boundary value problem (SBVP) used as a benchmark of the uncertainty quantification problem~\cite{tripathy2018deep,karumuri2020simulator}, which is given by
\begin{equation}\label{eq_SBVP}
    -\nabla \cdot(a(x, \omega) \nabla u(x, \omega))+c \cdot u(x, \omega)=f,\ \forall x \in [0,1]
\end{equation}
with  $c=15, f=10$. The Dirichlet boundary conditions are given by 
\[
\begin{array}{l}
u(0,\omega)=1, 
u(1,\omega)=0.
\end{array}
\]
The log-normal field is chosen as the input random field $a(\cdot,\cdot) $ given by 
\[
\log (a(x, \omega)) \sim \operatorname{GP}\left(\mu(x), k\left(x, x^{\prime}\right)\right),
\]
with mean  $\mu(x)=0$ and the exponential covariance function $ k\left(x, x^{\prime}\right)$ given by 
\[
k\left(x, x^{\prime}\right)=\sigma^{2} \exp \left\{-\frac{\left|x-x^{\prime}\right|}{\ell_{x}}\right\},
\]
where $\ell_x$ is the correlation length. We set $\ell_x=1$ and $\ell_x=0.3$ to test the performance of our method.

The goal of using neural operators for this problem is to learn the mapping from the spatial-varing function $a(\cdot, \cdot)$ to the solution function $u(\cdot)$. Therefore, the input domain $\mathcal{D}_1$ and the output domain $\mathcal{D}_2$ are the same, i.e., the interval $[0,1]$.  
The Python package \textit{FEniCS}~\cite{logg2012dolfin} is applied to compute 1000 numerical solutions for the input function $a(\cdot, \cdot)$ with resolution 129. Then, we split these samples into three subsets, i.e., 600 training samples, 200 validation samples, and 200 test samples. We use input functions with resolution 33 to train models and subsequently test them on the dataset with resolutions 33, 65, and 129, respectively.

In RDO, we set $T_1$ to 3 and $T_2$ to 1. All the truncated frequencies of three FIOs are 16. Moreover, the number of nodes in each layer of $\mathbf{f}_\theta$ in RDO are $1,\ 100,\ 100,\ 100$, respectively. For DeepONet, the number of nodes for each layer in the  branch net is $33,\ 100,\ 100,\ 100$, respectively. The trunk net shares the same structure with $\mathbf{f}_\theta$ of RDO. For FNO, the modes and the width are set to 16 and 128, respectively.  
\begin{figure}[H]
    \begin{center}
        \subfloat[]{\label{fig:ode_input_function}
            \includegraphics[width=0.4\linewidth]{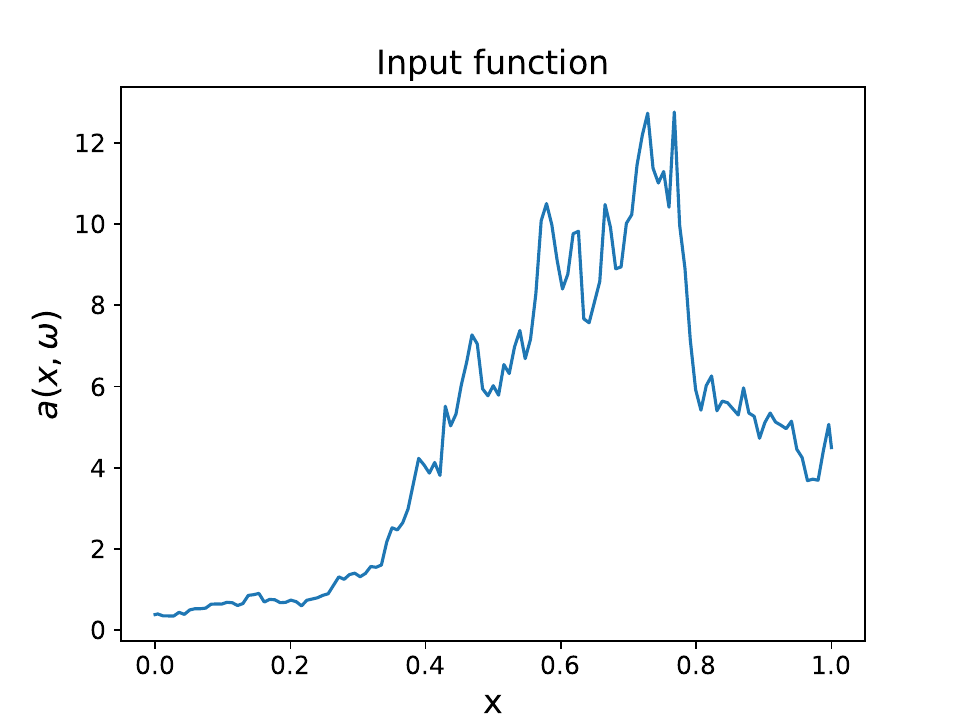}
        }\qquad
        \subfloat[]{
            \includegraphics[width=0.4\linewidth]{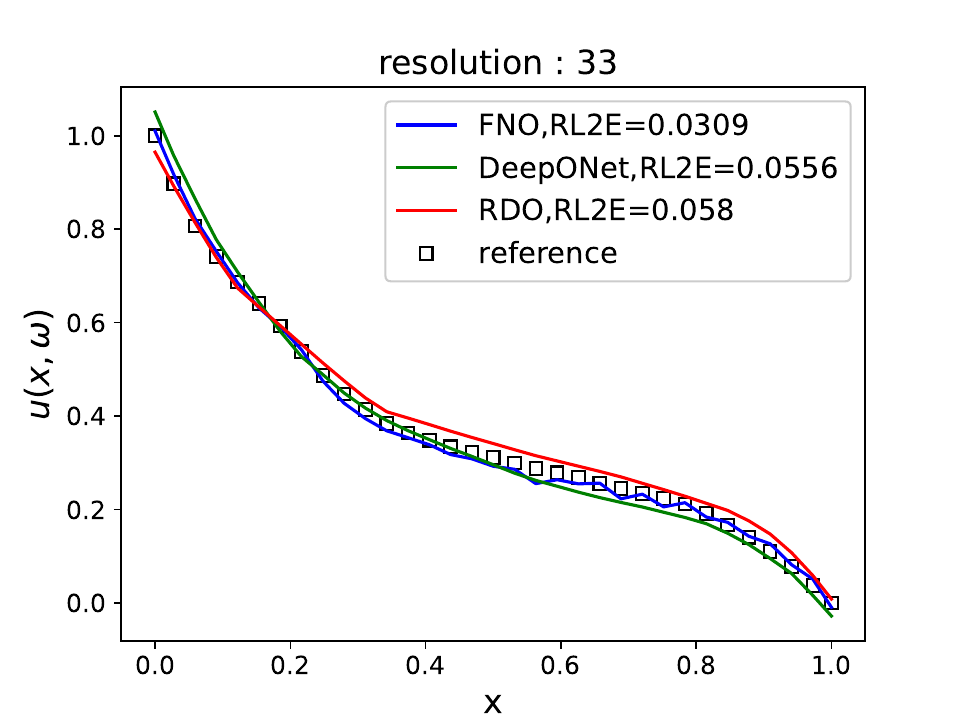}
        }\\
        \subfloat[]{
            \includegraphics[width=0.4\linewidth]{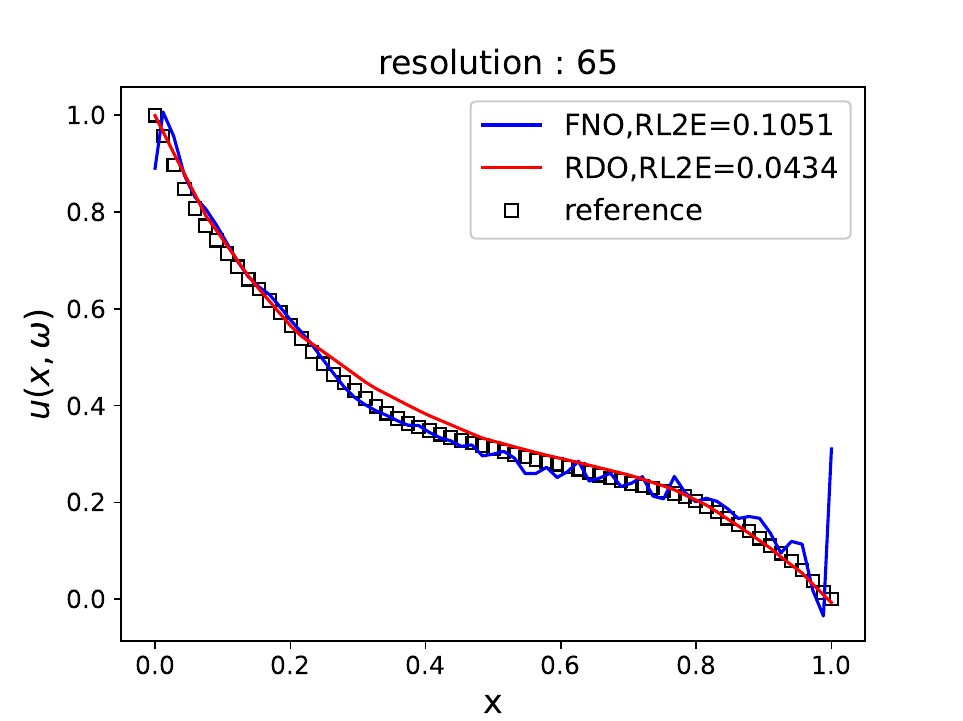}
        }\qquad
        \subfloat[]{
            \includegraphics[width=0.4\linewidth]{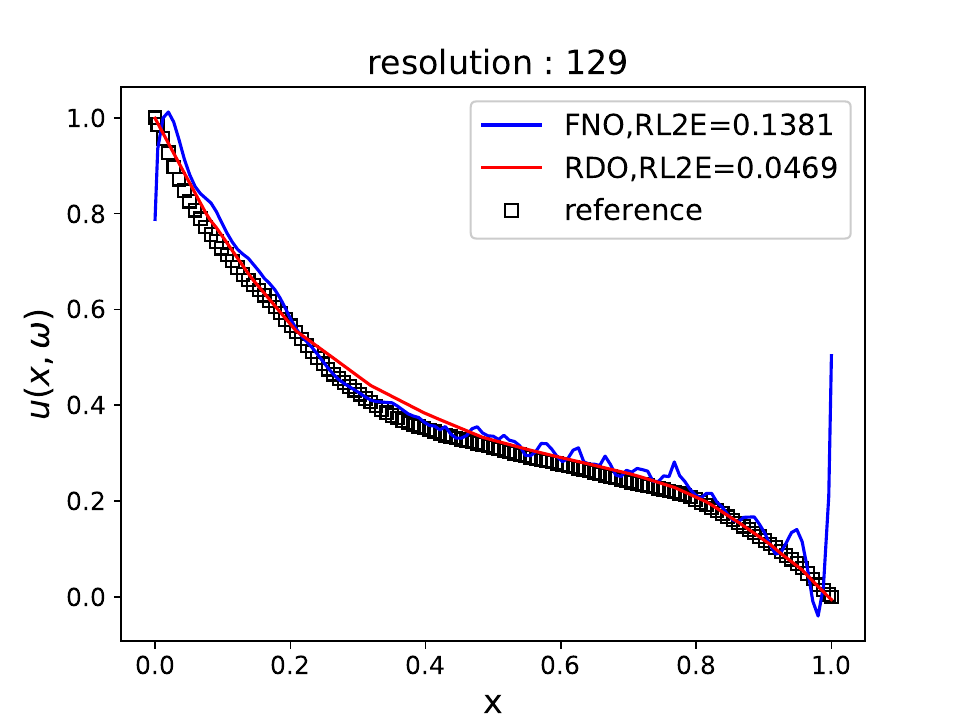}
        }
    \end{center}
    \caption{Computational results for $\ell_x=1$. (a) gives the input function $a(\cdot,\cdot)$.  (b), (c) and (d) shows the prediction results for the resolution 33, 65, 129, respectively. }\label{fig:ODE_comp1}
\end{figure}
Figure~\ref{fig:ODE_comp1} depicts that FNO gets the worst performance on the boundary and RDO could retain the accuracy on higher resolutions. We observe that when testing on higher resolutions, FNO loses more boundary information. This is primarily due to the fact of the inhomogeneous boundary condition of this problem while FFT relies on the homogenous boundary condition. For RDO, this problem is mitigated by introducing the AIO after the FIO layers. 

In Table~\ref{tab:SBVP_comparison}, the results of all methods on the test dataset are summarized. For resolution 33, RDO exhibits intermediate error when compared to other methods. For higher resolution data, RDO gives the best performance. For further demonstration of the effectiveness of RDO, we repeat the above experiment with $\ell_x=0.3$ which corresponds to a more stochastic problem and the results are reported in Table~\ref{tab:SBVP_comparison_lx0.3}. Computational results show that RDO performs best among all methods for even more difficult problems.

\begin{table}[H]
    \centering
    \caption{Relative $L_2$ error for $\ell_x=1$ for different input resolutions.  All models are trained with resolution 33.}
    \label{tab:SBVP_comparison}
    \begin{tabular}{ccccc}
         Resolution & FNO  & RDO & DeepONet \\
         \hline
        33  &  1.13\%  & 1.35\%  & 2.68\% \\
        65 & 7.56\%   & 1.42\% & $-$ \\
        129 & 8.60\% &  1.50\% & $-$ \\
    \end{tabular}
\end{table}
\begin{table}[H]
    \centering
    \caption{Relative $L_2$ error for $\ell_x=0.3$ for different input resolutions. All models are trained with resolution 33.}
    \label{tab:SBVP_comparison_lx0.3}
    \begin{tabular}{ccccc}
         Resolution & FNO  & RDO & DeepONet \\
         \hline
        33  &  1.78\%  & 2.44\%  & 9.53\% \\
        65 & 6.00\%   & 2.32\% & $-$ \\
        129 & 7.16\% &  2.40\% & $-$ \\
    \end{tabular}
\end{table}

\subsection{Darcy flow}

We test the performance of RDO using the 2D Darcy flow problem which is a benchmark in~\cite{li2020fourier},
\begin{align}\label{eq:DarcyFlow}
-\nabla \cdot(K(x,y)\nabla u(x,y))&=f(x,y),\qquad (x,y)\in \Omega\\
u(x,y)&=b(x,y).\qquad (x,y)\in \partial \Omega
\end{align}
Here $K(x,y)$ is the diffusion coefficient function. In this numerical example, we try to showcase the capability of RDO for the problems where the input and output functions are defined on different and irregular domains. Thus, we consider two different geometries of the domain mentioned in~ \cite{lu2022comprehensive}, including a triangular domain in Section ~\ref{p:darcyTraingular} (Case \MakeUppercase{\romannumeral1}) and a triangular domain with notch in Section~\ref{p:darcyTraingulaRDOtch} (Case \MakeUppercase{\romannumeral2}). 
Furthermore, the resolutions of input functions vary, but the interested points of the solution domain are fixed. The datasets are generated by the Partial Differential Equation Toolbox in MATLAB\footnote{MATLAB is a proprietary multi-paradigm programming language and numeric computing environment developed by MathWorks.}. 
It is noteworthy that FNO fails to resolve this problem since the input domain is different from the output domain.

\subsubsection[short]{Darcy problem in a triangular domain}\label{p:darcyTraingular}

In this subsection, we set $\Omega$ as the triangular domain with three vertexes $(0,0),\ (0,1)$, and $(0.5,\sqrt{3/2})$. Set $K(x, y)=0.1$ and $f(x, y)=-1$, and the target of this task is using the neural operator to approximate the mapping from the boundary condition function $b(x, y)$ to the pressure field $u(x, y)$, i.e.,
\[
\mathcal{G}: b(x,y) \rightarrow u(x,y).
\]
Here, we choose a 1-D Gaussian process to generate the boundary condition of the triangular domain, which is given by
\begin{align*}
    a(x) & \sim \mathcal{G} \mathcal{P}\left(0, \mathcal{K}\left(x, x^{\prime}\right)\right), \nonumber\\
    \mathcal{K}\left(x, x^{\prime}\right) & =\exp \left[-\frac{\left(x-x^{\prime}\right)^2}{2 l^2}\right],\ l=0.2.
    \end{align*}

For RDO, we set $T_1=1$, $T_2=1$, and $d_{\phi_0}=p=64$. There are 4 layers in the trunk net of RDO and the width are 2, 128, 128, and 64, respectively. The number of modes kept in FIO is set to 26.
In DeepONet, the structure of the branch net and the trunk net are $[s, 128, 128, 100]$ and $[2, 128, 128,128, 100]$, respectively. Here, $s$ represents the resolution of the input function and the numbers in the bracket represent the width of each layer. In this experiment, the number of interested points of the solution function is 2295. To evaluate the performance of different methods comprehensively, we train RDO on datasets with input resolutions of $51, 101, 201, 401, 801$, and $1601$, respectively. Then, we test these trained models on datasets of various resolutions. However, DeepONet can only be trained and tested using the same resolution datasets.

A representative instance of the solution with the corresponding boundary condition is displayed in Figure~\ref{fig:darcyTraingular}. RDO and DeepONet are trained on the same datasets with resolution of 51, however, RDO can be tested with different resolutions of boundary conditions, i.e., 51 and 101 while DeepONet fails. The corresponding results and error are shown in the two middle columns. The right-most column shows the solution of DeepONet with the boundary condition of resolution 51. We can find that RDO and DeepONet achieve similar accuracy on the boundary function with low resolution. Furthermore, when the resolution of the input function increases, RDO can also maintain a low error level. 
\begin{figure}[H]
    \begin{center}
       \includegraphics[width=0.9\linewidth]{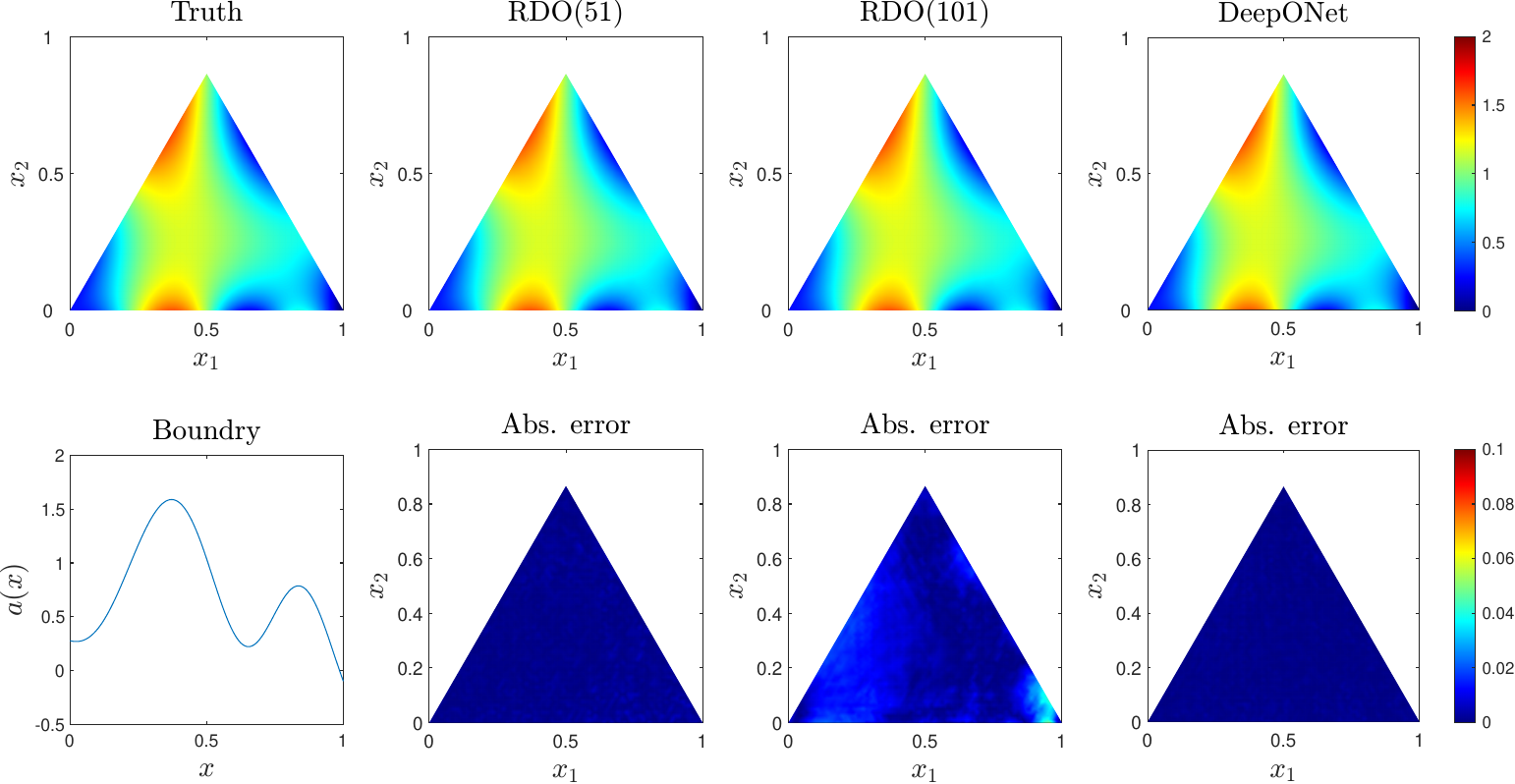}
    \end{center}
    \vspace{-.2cm}
    \caption{One representative computational result for Case \MakeUppercase{\romannumeral1}.} \label{fig:darcyTraingular}
\end{figure}

To assess the learning ability of different methods, we summarize the RL2E of different models which are trained and tested on the same resolution dataset in Table~\ref{tab:darcyTraingular_super}. We can find that with the increase of resolution, the error of DeepONet tends to rise. In contrast, RDO exhibits a consistent error level.
\begin{table}[H]
    \centering
    \caption{The relative $L_2$ error on test data for  Case \MakeUppercase{\romannumeral1} of the Darcy problem. All models are trained and test with the same resolution. }
    \label{tab:darcyTraingular_super}
    \begin{tabular}{cccc}
        Resolution & RDO & DeepONet \\
        \hline
       51  &   $0.21\%$  &  $0.17\%$ \\
       101 &  $0.27\%$ & $0.22\%$\\
       201 &  $0.21\%$ & $0.28\%$\\
       401 &  $0.21\%$ & $0.31\%$\\
       801 &  $0.20\%$ & $0.38\%$\\
       1601 &  $0.19\%$ &  $0.24\%$\\
   \end{tabular}
\end{table}

Table~\ref{tab:darcyTraingular} gives a comprehensive survey of RDO where RDO is trained on each resolution dataset and tested on other resolution datasets. We can easily find that RDO trained on low-resolution dataset achieves low error level for high target resolutions. Moreover, when the resolution of the training data is sufficient high, RDO still can provide accurate predictions for test instances of low resolution. For example, when the training resolution is $1601$, RDO still can keep a comparable low error level for coarser resolution 401.
\begin{table}[H]
    \centering
    \caption{The relative $L_2$ error on test data for Case  \MakeUppercase{\romannumeral1} of the Darcy problem.}
    \label{tab:darcyTraingular}
    \begin{tabular}{ccccccc}
         \diagbox{Test}{Train} & 51 & 101 & 201 & 401  & 801 & 1601\\
         \hline
        51  &   $0.21\%$ & $5.21\%$ & $6.29\%$ & $4.96\%$ & $3.57\%$& $6.18\%$\\
        101 &   $1.67\%$ & $0.27\%$ & $0.91\%$ & $1.17\%$ & $1.38\%$& $1.53\%$\\
        201 &   $2.31\%$ & $1.66\%$ & $0.21\%$ & $0.43\%$ & $0.59\%$& $0.71\%$\\
        401 &   $2.64\%$ & $2.01\%$ & $0.42\%$ & $0.21\%$ & $0.28\%$& $0.35\%$\\
        801 &   $2.81\%$ & $2.07\%$ & $0.58\%$ & $0.28\%$ & $0.20\%$& $0.22\%$\\
        1601 &   $2.91\%$ & $2.12\%$ & $0.67\%$ & $0.35\%$ & $0.22\%$& $0.19\%$\\
    \end{tabular}
\end{table}

\subsubsection[short]{Darcy problem in a triangular domain with notch} \label{p:darcyTraingulaRDOtch}

We introduce an enhanced example by incorporating a notch into the triangular domain. Specifically, the vertices of the notch are located at $(0.49, 0),\ (0.51, 0),\ (0.49, 1)$, and $(0.51, 1)$, respectively.
The boundary conditions are generated in the same manner with Section~\ref{p:darcyTraingular}. A total of 2000 examples of resolution 201 are generated, and these data are divided into three distinct subsets for training, validation, and testing with the corresponding ratio $6: 2: 2$. 
We train RDO on the resolution $51$ dataset and test it on the resolution $51, 101, 201$ datasets to evaluate the ability of zero-shot super-resolution learning. DeepONet is trained and test on the resolution 51 dataset. The structure of RDO and DeepONet are the same with Section~\ref{p:darcyTraingular}.

Figure~\ref{fig:darcyTraingular_notch} depicts the computational results with a corresponding test boundary condition. The predicted solutions and errors of RDO are shown in the two middle columns. The right-most column shows the solution of DeepONet with the boundary condition of resolution 51. We can find that RDO and DeepONet get similar accuracy for the same resolution of the input function. Furthermore, when the resolution of the boundary function increases to 101, RDO can still maintain a low error level.

We summarize the RL2E of different methods in Table~\ref{tab:darcyTraingular_notch}. The error of RDO exhibits an order of magnitude smaller than that of DeepONet for the resolution 51 test dataset. We also find that for super-resolution test datasets, RDO also maintains a low error level. However, DeepONet has no prediction capability for super-resolution test data, since retraining is needed when the dimension of the input changes.
\begin{figure}[H]
    \begin{center}
            \includegraphics[width=0.9\linewidth]{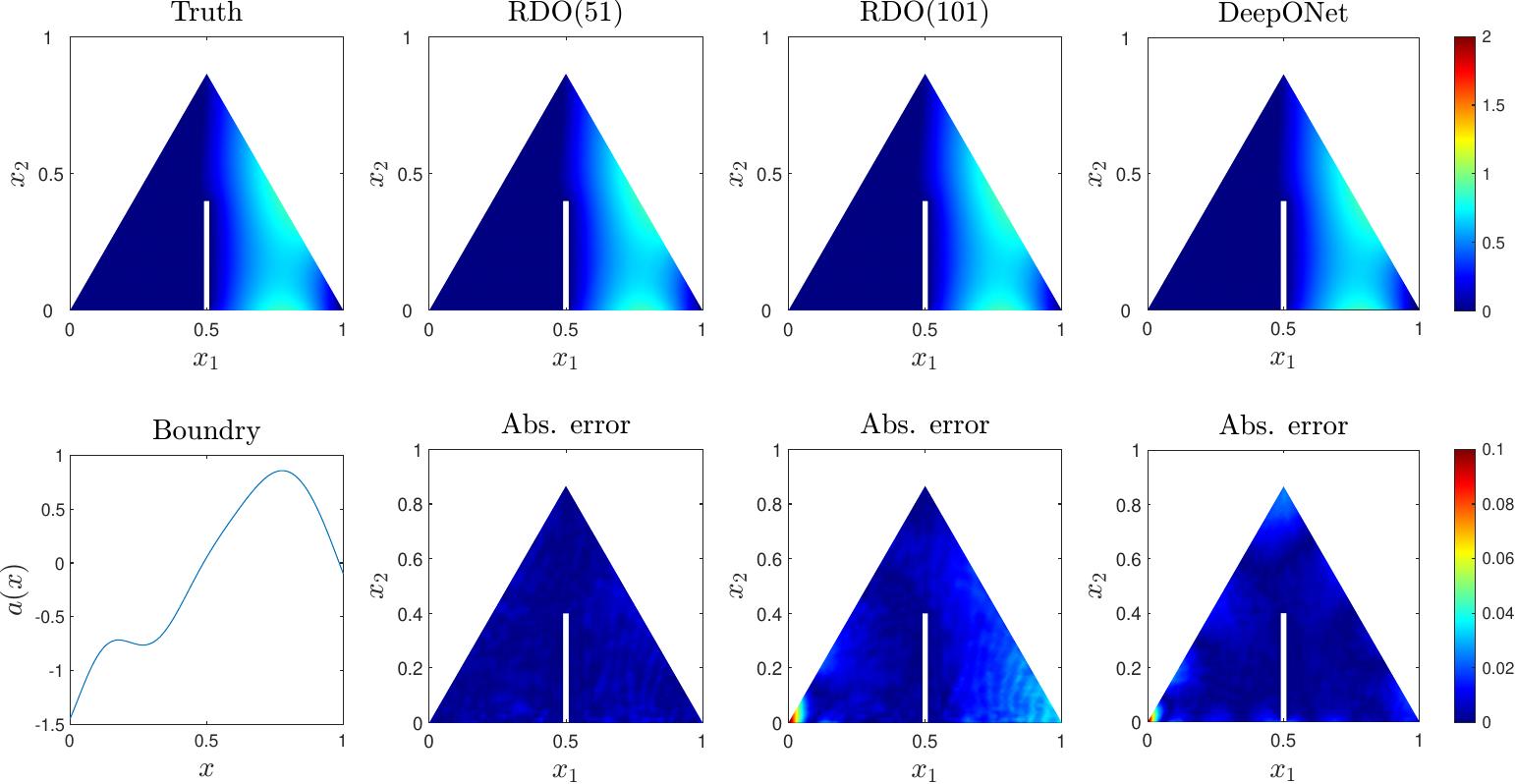}
    \end{center}
    \vspace{-.2cm}
    \caption{Computational results for Case \MakeUppercase{\romannumeral2} of the Darcy problem.} \label{fig:darcyTraingular_notch}
\end{figure}
\begin{table}[H]
    \centering
    \caption{Relative $L_2$ error for Case  \MakeUppercase{\romannumeral2}. All models are trained for input functions of resolution 51.}
    \label{tab:darcyTraingular_notch}
    \begin{tabular}{ccc}
        Resolution  & RDO & DeepONet \\
        \hline
        51  &  $0.31\%$  & $2.68\%$ \\
       101 &$2.99\%$ & $-$ \\
       201 &$3.82\%$ & $-$ \\
   \end{tabular}
\end{table}

\subsection{Burgers' equation}
We consider a one-dimensional Burgers' equation given by,
\begin{align*}
    \frac{\partial u}{\partial t}+u \frac{\partial u}{\partial x}&=v \frac{\partial^2 u}{\partial x^2}, \quad x \in(-1,1),\ t \in(0,1],\\
    u(x,0) &= -sin(\pi\cdot x)\cdot\omega,
\end{align*}
where $v=0.1$ represents the viscosity and $\omega \sim \mathcal{N}(1.2, 1)$. Here, we learn the mapping from the initial condition $u_0(x) := u(x, 0)$ to the solution function $u(x, t)$, i.e.,
\[
\mathcal{G}: u_0(x) \mapsto u(x, t) ,
\]
for $t\in(0,1]$.

The discretized resolution of the spatial domain is 161 and the step size for the temporal domain is 0.01 for the backward Euler method. In this numerical example, we test RDO on different resolutions of the spatial domain. We randomly generate 2000 periodic initial conditions and split the datasets into three subsets with the ratio $0.6, 0.2, 0.2$, respectively, for training, validation, and testing.
For RDO, we set $T_1=3$, $T_2=1$, $d_{\phi_0}= 64$, and $p=512$. The truncated frequency $k$ of FIOs in RDO is set to 8. Furthermore, the branch net and the trunk net in DeepONet are $[41, 512,  512, 512, 512]$ and $[2, 512, 512, 512]$, respectively, where numbers inside the bracket represent the width of each layer. All models are trained on datasets of resolution 41 and tested on the other resolution datasets.

The results of one representative instance are plotted in Figure~\ref{fig:burgers1}. The first row shows the solution with the resolution of the input function being 41 and the corresponding absolute error are plotted in the second row. We can find that the error of RDO is smaller than that of DeepONet. In the remaining rows, the solutions and errors for resolution 81 and 161 are shown, respectively. Moreover, RDO still maintains a low absolute error level.
\begin{figure}[H]
    \begin{center}
            \includegraphics[width=0.75\linewidth]{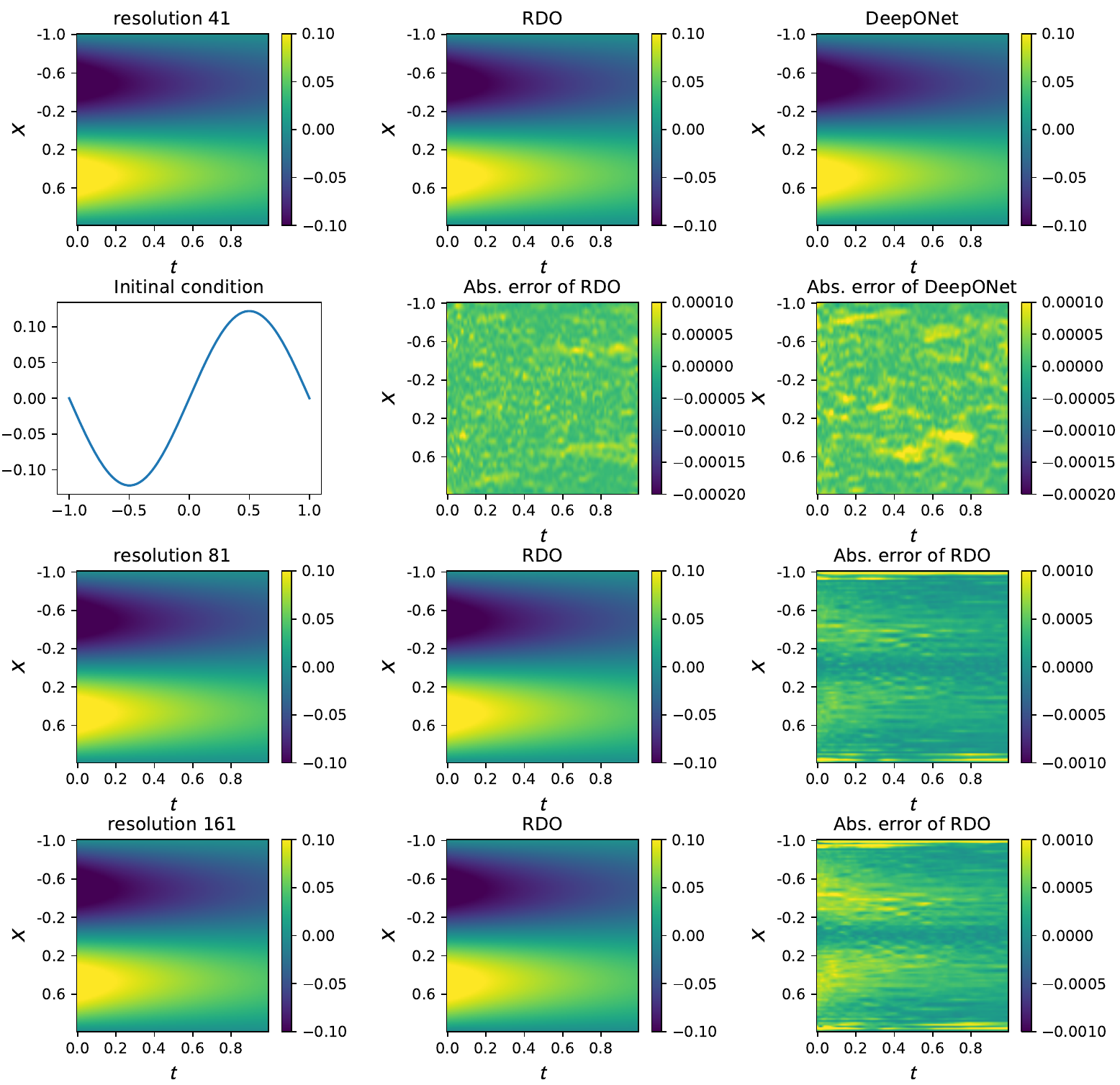}
    \end{center}
    \vspace{-.2cm}
    \caption{One representative example of the Burgers' equation. For resolution 41, the RL2E of DeepONet is $0.6\%$ and while $0.05\%$ for RDO. When the resolution increases to 81 and 161, the RL2E for RDO becomes $0.61\%$ and $0.81\%$, respectively.} \label{fig:burgers1}
\end{figure}

The error rates are summarized in Table~\ref{tab:BurgersProblem}. Since the input dimension of the branch net is fixed, DeepONet is unable to test on other resolutions datasets. In addition, RDO outperforms DeepONet for the test resolution 41.
\begin{table}[H]
    \centering
    \caption{The average relative $L_2$ error on the test dataset for Burgers' equation.}
    \label{tab:BurgersProblem}
    \begin{tabular}{ccc}
        Test resolution & RDO & DeepONet \\
         \hline
          41  &   $0.05\%
         $  & $0.07\%$ \\ 
          81  &   $1.08\%
         $  & - \\ 
          161  &   $1.50\%
         $  & - \\ 
    \end{tabular} 
\end{table}

\section{Conclusion}\label{sec:Conclusion}

In this paper, we propose an extension of the DeepONet, i.e., the resolution-invariant deep operator (RDO), together with the corresponding universal approximation theorem. RDO exhibits the resolution invariant property which implies that it can be trained using low-resolution input and predict for high-resolution without the need of network retraining in contrast with DeepONet. Compared with FNO, our RDO framework can handle problems where the input domain and the output domain are different. Numerical experiments demonstrate that RDO can solve the irregular domain PDE problems and extend to time-dependent problems easily. Compared with existing alternatives in literature,  RDO achieves the best accuracy and has a more flexible framework. 


\bibliographystyle{elsarticle-num-names} 
\bibliography{ref}
\end{document}